\documentclass{article}
\usepackage{graphicx}
\pdfoutput=1

\usepackage{graphicx} 
\usepackage{hyperref}
\usepackage[nottoc,numbib]{tocbibind}
\usepackage{amsmath}
\usepackage{amsthm}
\usepackage{amssymb}
\usepackage{amsfonts}
\usepackage{amsmath}
\usepackage{amsthm}
\usepackage{amssymb}
\usepackage{bbm}
\usepackage{tikz}
\usepackage{breqn}
\usepackage{pdflscape}
\usepackage{algorithm}
\usepackage{algpseudocode}
\usepackage{multicol}
\usepackage{tabularx}
\usepackage{blindtext}
\usepackage[]{caption}
\usepackage{hyperref}
\usepackage[nottoc,numbib]{tocbibind}
\usepackage{authblk}
\usepackage[giveninits=true, doi=false, url=false, date=year, backend=biber]{biblatex}
\usepackage[a4paper,
            bindingoffset=0.2in,
            left=.8in,
            right=.8in,
            top=1in,
            bottom=.8in]{geometry}

\newtheorem{definition}{Definition}
\newtheorem{lemma}{Lemma}
\newtheorem{theorem}{Theorem}

\newtheorem{assumption}{Assumption}

\newcommand{\rayleigh}[2]{\exp\left(-\left(\frac{#1}{r_0}\right)^{#2}\right)}
\newcommand{\tti}{\rightarrow\infty}
\newcommand{\graph}{\mathcal{G}}
\newcommand{\bigO}[1]{\mathcal{O}\left(#1\right)}

\newcommand{\prob}[1]{\mathbb{P}\left(#1\right)}

\newcommand{\ttz}[0]{\rightarrow 0}

\title{Entropy of Soft Random Geometric Graphs in General Geometries}
\author[1]{Oliver Baker}
\author[1]{Carl P. Dettmann}

\affil[1]{School of Mathematics, University of Bristol}
\date{\today}

\addbibresource{refs.bib}
\renewbibmacro{in:}{}
\DeclareFieldFormat{pages}{#1}
\DeclareFieldFormat[article, inproceedings]{volume}{\mkbibbold{#1}}

\begin{document}

\maketitle

\begin{abstract}
    We study the effect of the choice of embedding geometry on the entropy of random geometric graph ensembles with soft connection functions. First we show that when the connection range is small, the entropy is dependent only on the dimension of the geometry and not the shape, but for large connection ranges the boundaries of the domain matter. Next, we formulate the problem of estimating entropy as a problem of estimating the average degree of a graph with the binary entropy function as its connection function. We use this formulation to study the effect of boundaries on the entropy, and to estimate the entropy of soft random geometric graphs in complicated geometries where a closed form pair distance density is not available.
\end{abstract}


\section{Introduction}
The study of random graphs has found applications in a wide range of fields, 
including sociology \cite{watts1998collective}, wireless communications \cite{haenggi2009stochastic} and biology \cite{van2024random}. As the availability of data increases, these networks become larger, and it becomes more important to understand how local properties reflect the macroscopic properties of the network. A large class of networks exhibit a spatial embedding \cite{barthelemy2011spatial}. One model of a spatial network is the Random Geometric Graph (RGG), first introduced as the `Random Planar Network' by Gilbert in 1961 \cite{gilbert1961random}. In this model, randomly distributed nodes are connected if they lie less than a fixed distance apart. Waxman extended this model to allow for probabilistic connections that decay exponentially in distance, with a view to modelling wireless communication networks \cite{waxman1988routing}. \\

Waxman's model can be modified to allow for more general connection functions. We call this model the \textit{Soft} Random Geometric Graph (SRGG), which is a popular choice when modelling wireless networks \cite{haenggi2009stochastic}. Recent developments in the SRGG literature include studies of connectivity \cite{penrose2016connectivity, dettmann2016random} and entropy \cite{coon2018entropy,vippathalla2024lossy,baker2025entropy}. In this work, we focus on the latter. The entropy of the SRGG is fundamental to the compression of random networks \cite{vippathalla2024lossy}, and how efficiently nodes can communicate topological information around a communication network \cite{an2002entropy, coon2016topological}. In statistical physics, the canonical ensemble of spatial networks is the one that maximises the entropy with respect to a set of constraints. This results in an SRGG ensemble whose connection function takes a Fermi-Dirac form \cite{bianconi2021information, krioukov2016clustering, coon2018entropy}. \\

In this work, we quantify the effect of the embedding geometry on the entropy of the SRGG. In Section \ref{sec:scaling}, we prove the following
\begin{enumerate}
    \item A universality result for SRGG ensembles in the small connection range limit ($r_0 \to 0$), where the entropy behaves asymptotically to $C_{p,d}r_0^d$ where $C_p$ is a constant independent of the geometry, and $d$ is the dimension of the space,
    \item A scaling law for the entropy in the large connection range limit, where the embedding geometry \textit{does} contribute
    \item These scaling laws imply a qualitative gap in compressibility between SRGG and Erd\H os–R\'enyi ensembles in the short-range regime.
\end{enumerate}

In Section \ref{sec:entropy_graph}, we construct an auxiliary graph, called the `entropy graph' so that the average degree of this graph equals the conditional entropy of the SRGG. We use this formalism and methods from the connectivity literature (\cite{dettmann2016random}) to quantify the effects of each boundary component on the conditional entropy. As two examples, we quantify the effect of a corner using a wedge geometry, and demonstrate log-periodic oscillatory behaviour of SRGG entropy in the Cantor set. \\

Section \ref{sec:conclusion} concludes, Appendix \ref{sec:proofs} contains proofs of the results in Section \ref{sec:scaling}, and Appendix \ref{sec:wedge_and_cantor} contains the proofs and derivations from Section \ref{sec:entropy_graph}.

\section{Preliminaries}

\subsection{Asymptotic notation}

We begin with a brief overview of the asymptotic notation used in this paper. Given two real-valued functions $f$ and $g$, we say $f(x) = \bigO{g(x)}$ as $x\tti$ if there exist constants $c\in\mathbb{R}_{>0}$, and $x_0$ in the domain of $f$ and $g$ such that $f(x) \leq cg(x)$ for all $x > x_0$. Likewise, $f(x) = \bigO{g(x)}$ as $x \ttz$ if $f(x) \leq cg(x)$ for all $x < x_0$. If $f(x) = \bigO{g(x)}$ and $g(x) = \bigO{f(x)}$ then we write $f(x) = \Theta(g(x))$ (or equivalently, $g(x) = \Theta(f(x))$. If $\lim_{x\tti} f(x)/g(x) = 0$, then we say $f(x) = o(g(x))$ as $x \tti$, and if $\lim_{x\ttz} f(x)/g(x) = 0$, then we say $f(x) = o(g(x))$ as $x\ttz$. Finally, if $\lim_{x\tti} f(x)/g(x) = 1$, then we write $f(x) \sim g(x)$ as $x\tti$ (and similarly for $x\ttz$).

\subsection{Soft Random Geometric Graphs}

Let $(\Omega, \mathcal{F}, \mu)$ be a probability space equipped with a metric induced by the Euclidean norm $\|\cdot\|$, and $\Omega \subset \mathbb{R}^d$. Except for Section \ref{cantor}, the measure $\mu$ will represent the Lebesgue measure. Define $D = \sup_{x,y \in \Omega}\|x-y\|$, the \textit{diameter} of the domain $\Omega$. A \textit{Soft Random Geometric Graph} (SRGG) is a graph $G = (V,E)$ where $V = \{1,...,n\}$, and $E$ is constructed as follows. Distribute points $\{Z_i\}_{i\in V}$ in $\Omega$ according to probability measure $\mu$. Let $r_{ij} := \|Z_i-Z_j\|$ and $p : \mathbb{R}_{\geq 0}  \rightarrow [0,1]$, then we will include each edge $(i,j)$ in the edge set $E$ independently with probability $p(r_{ij}/r_0)$ where $r_0 \in \mathbb{R}$ is known as the \textit{typical connection range}. If we take $p(r/r_0) = \mathbb{I}(r/r_0<1)$ then we recover the classical \textit{hard} random geometric graph, and if we take $p(r/r_0) = q$ for a constant $q$, the SRGG is equivalent to the Erd\H os-R\'enyi (ER) random graph $G(n,q)$. \newline

There are several standard choices for the connection function $p$. In the wireless communications literature, a standard choice is the connection function
\begin{equation}
    \label{rayleigh}
    p(r/r_0) = \rayleigh{r}{\eta}
\end{equation}
which is known as the \textit{Rayleigh fading} model, and is derived through the signal to noise ratio of a wireless channel (see e.g. \cite{haenggi2009stochastic}), which is governed by the \textit{path-loss exponent} $\eta$ which tends to be between 2 and 6 in practical applications. This model is also of theoretical interest because $\eta = 0$ gives the ER graph, and $\eta \tti$ recovers the hard RGG. Other popular choices of connection function in other fields include the Fermi-Dirac connection function $p(r/r_0) = (1+\exp(\alpha + r/r_0))^{-1}$ which is derived from maximum entropy statistics of the SRGG model \cite{bianconi2009entropy}, and \textit{non-homogeneous} connection functions like $\tilde{p}(x/r_0, y/r_0) = W_xW_y\min\left(1, \left(\frac{\|x-y\|}{r_0}\right)^{-\alpha}\right)$ where $W_x,W_y$ are \textit{weights} associated with $x$ and $y$. Connection functions like $\tilde{p}$ can give us a \textit{power-law} degree distribution (see e.g. \cite{gracar2022chemical}). Many other connection functions are explored in \cite{dettmann2016random}. \newline

To fix some notation, we denote by $f(r)$ the probability density function of the distance between two random points distributed according to $\mu$ in $\Omega$, $\mathcal{R} \in \mathbb{R}^{\binom{n}{2}}$ to be the random vector representing the distances between each pair of $n$ random points in $\Omega$. We note that the function $f$ is not well defined for the Cantor set, see Section \ref{cantor}. $\graph(n,r_0,\eta)$ is the ensemble of SRGGs on $n$ nodes with a Rayleigh fading connection function with parameters $r_0$ and $\eta$. We will abbreviate this to $\graph$ when the parameters are either clear from context or unimportant in the context. Finally, denote by $X_{ij}$ the Bernoulli random variable which represents the existence of edge $(i,j)$ in the SRGG.

In general we will consider connection functions of the form $p= p(r/r_0)$, i.e. those that take the ratio of the distance and connection range as the argument. The exception is in Section \ref{sec:entropy_mass}, where we will make the dependence on $r_0$ implicit and write for example $p(r) = \exp(-(r/r_0)^\eta)$. This is because in this section, we will treat connection functions with different connection range parameters as different connection functions, rather than treating $r_0$ as a parameter that we can take limits with respect to.

\subsection{Entropy of Soft Random Geometric Graphs}

If $\graph$ is an ensemble of random graphs on $n$ nodes (not necessarily SRGGs), we define the \textit{entropy} of the ensemble as
\begin{equation}
    H(\graph) := -\sum_{G \in \graph} \prob{G}\log \prob{G}
\end{equation}
where the logarithm is to base $e$. In \cite{coon2018entropy} some simple bounds were derived for the entropy of the SRGG ensemble. First, we must note that the entropy of the ensemble $G$ is equivalent to the joint entropy of the edge variables, $H(\{X_{ij}\}_{i<j})$ where $X_{ij} = 1$ if $(i,j)\in E$ and is 0 otherwise. Then, using Shannon's independence bound we obtain
\begin{equation}
    H(\graph) \leq \sum_{i<j} h_2(\prob{X_{ij} = 1}) = \binom{n}{2}h_2(\bar{p})
\end{equation}
where $h_2(p) = -p\log p - (1-p)\log(1-p)$ is the binary entropy function, and $\bar{p} = \int_{\Omega}\int_{\Omega} p(\|x-y\|/r_0) dxdy$ is the \textit{average connection probability}. We may also write $\bar{p} = \int_0^D f(r)p(r)dr$, where $f$ is the density function of the distance between two random points in $\Omega$. As a lower bound, we condition on the locations of nodes, or equivalently on the pair distances between nodes. We refer to \cite{coon2018entropy} for a more in-depth explanation.
\begin{equation}
    \label{lower_bound}
    H(\graph) \geq H(\graph(r_0)|\mathcal{R})= \binom{n}{2}\int_{\Omega}\int_{\Omega} h_2(p(\|x-y\|/r_0)) dxdy
\end{equation}
It was shown in \cite{janson2010graphons} that (\ref{lower_bound}) is also the limiting entropy-per-edge of the SRGG ensemble as $n\tti$. Specifically,
\begin{equation}
    \lim_{n\rightarrow\infty} \frac{H(\graph)}{\binom{n}{2}} = \int_{\Omega}\int_{\Omega} h_2(p(\|x-y\|/r_0)) dxdy
\end{equation}
These results motivate that $H(\graph(r_0)|\mathcal{R})$ is an important object to study in the field of SRGG entropy. To ease notation, we will introduce the conditional entropy-per-edge of the SRGG, which we denote as $\overline{H}(\graph(r_0)|\mathcal{R}) := \binom{n}{2}^{-1}H(\graph(r_0)|\mathcal{R})$. We will investigate first how this quantity scales in the limiting cases of very small and very large typical connection ranges.

\section{Entropy Scaling in the Connection Range}
\label{sec:scaling}
\subsection{Small Connection Range}
    In this section, we will analyse the initial slope of the conditional entropy of SRGGs. Whilst we restrict the connection function here to the case of Rayleigh fading, we maintain significant generality in the domain $\Omega$. We will assume only the following, which will give us an asymptotic expression for the distance distribution for small $r$.
    \begin{assumption}
        \label{assumption_omega}
        The domain $\Omega \subset \mathbb{R}^d$
        \begin{itemize}
            \item[A1.] has a non-empty interior
            \item[A2.] has a boundary that is piecewise smooth, has a finite number of corners, and is of finite length.
        \end{itemize}
    \end{assumption}
    
    We also stress that, with $D := \sup\{\|x-y\| : x,y \in \Omega\}$ as the diameter of $\Omega$ and $f(r)$ as the density of pairwise distances in $\Omega$, the integral in (\ref{lower_bound}) can be written as 
    \begin{equation}
        \label{eq:entropy_per_edge}
        \overline{H}(\graph(r_0,\eta)|\mathcal{R}) = \int_0^D f(r)h_2(p(r/r_0)) dr
    \end{equation}
    which, as in \cite{coon2018entropy}, is (once multiplied by $\binom{n}{2}$) the entropy of an SRGG conditioned on the node positions. Note that we have dropped the dependence on $n$, since this quantity is independent of the number of nodes. Therefore, as well as graphs in the limit of high density, the result below will hold for the entropy of an SRGG where we know the node locations. We will now prove that for small connection ranges $r_0$, the conditional entropy of SRGGs with Rayleigh connection functions scales as $\Theta(r_0^d)$, where $d$ is the dimension of the domain. More precisely:

    \begin{theorem}
        \label{small_r0_asymptotic}
        Let $\Omega$ satisfy the conditions of Assumption \ref{assumption_omega}, and $0 < \eta < \infty$. Then
        \begin{equation}
            \overline{H}(\graph(r_0,\eta)|\mathcal{R}) \sim \frac{s_{d-1}r_0^d}{\eta}\Gamma\left(\frac{d}{\eta}\right)\left(\frac{d}{\eta}+\zeta\left(\frac{d}{\eta}+1\right)-\sum_{k=2}^{\infty}\frac{k^{-\frac{d}{\eta}}}{k-1}\right)
        \end{equation}
        as $r_0 \ttz$, where $s_{d-1}$ is the surface area of the unit $d$-ball, $\Gamma$ is the gamma function and $\zeta$ is the Riemann-Zeta function.
    \end{theorem}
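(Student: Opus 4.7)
The plan is to substitute an asymptotic expansion of the pair-distance density $f(r)$ near $r=0$ into the representation $\overline{H}(\graph(r_0,\eta)|\mathcal{R}) = \int_0^D f(r)\, h_2(p(r/r_0))\, dr$, expand the binary entropy as a convergent series in $p$, and recognize each resulting integral as a Gamma integral. Under Assumption \ref{assumption_omega}, for a uniform $\mu$ the spherical shell $\partial B(x,r)\cap\Omega$ has $(d-1)$-measure equal to $s_{d-1}r^{d-1}$ whenever $x$ lies at distance greater than $r$ from $\partial\Omega$; since the set of points within distance $r$ of $\partial\Omega$ has $d$-volume $O(r)$ thanks to the piecewise-smooth, finite-length boundary, one obtains $f(r) = s_{d-1}r^{d-1} + O(r^d)$ as $r\ttz$ (tacitly assuming $|\Omega|=1$, otherwise dividing the leading coefficient by $|\Omega|$). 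This is the only place where the geometry of $\Omega$ enters, and it enters only through the dimension $d$.

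Next I would invoke the exact identity
\begin{equation*}
h_2\!\left(e^{-(r/r_0)^\eta}\right) = \left[1 + (r/r_0)^\eta\right] e^{-(r/r_0)^\eta} - \sum_{k=2}^\infty \frac{e^{-k(r/r_0)^\eta}}{k(k-1)},
\end{equation*}
obtained from $-\log p = (r/r_0)^\eta$ together with the Taylor expansion $-(1-p)\log(1-p) = p - \sum_{k\geq 2} p^k/[k(k-1)]$; the series is dominated termwise by $1/[k(k-1)]$, so term-by-term integration is permitted. Replacing $f(r)$ by its leading part, extending the upper limit from $D$ to $\infty$ (which costs only an exponentially small error as $r_0\ttz$), and substituting $u=(r/r_0)^\eta$ reduces each contribution to a Gamma integral: $\int_0^\infty r^{d-1}e^{-k(r/r_0)^\eta}\,dr = (r_0^d/\eta)\,k^{-d/\eta}\,\Gamma(d/\eta)$, while the extra factor $(r/r_0)^\eta$ in the first term bumps the exponent to $\Gamma(d/\eta+1) = (d/\eta)\Gamma(d/\eta)$. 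Collecting everything yields the coefficient
\begin{equation*}
\frac{s_{d-1}\,r_0^d\,\Gamma(d/\eta)}{\eta}\left[\frac{d}{\eta} + 1 - \sum_{k=2}^\infty \frac{k^{-d/\eta}}{k(k-1)}\right],
\end{equation*}
and the partial fraction $1/[k(k-1)] = 1/(k-1) - 1/k$ combined with $\sum_{k\geq 2} k^{-(d/\eta+1)} = \zeta(d/\eta+1) - 1$ rearranges this into the precise form stated in the theorem.

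The main obstacle is controlling the two sources of error. The first is the $O(r^d)$ boundary correction in $f$; after the rescaling $u=(r/r_0)^\eta$ it contributes at most $O(r_0^{d+1}) = o(r_0^d)$, but one has to verify that the implicit constant is uniform in $r_0$. The second is the tail $r \gtrsim r_0$ where the small-$r$ expansion of $f$ is invalid: here $h_2(p(r/r_0))$ decays super-exponentially in $r/r_0$ (since $h_2(p) \lesssim -p\log p = (r/r_0)^\eta e^{-(r/r_0)^\eta}$ for $p$ small), which swamps any polynomial growth of $f$ on $[0,D]$ and makes that contribution sub-leading to $r_0^d$. Both bounds are quantitative consequences of Assumption \ref{assumption_omega}, which is precisely what is needed to quantify the boundary-strip volume and hence pin down the error to the stated order.
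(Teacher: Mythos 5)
Your proposal is correct and follows essentially the same route as the paper's proof: replace $f(r)$ by its leading term $s_{d-1}r^{d-1}$ (the paper cites Guinier for this expansion), expand $-(1-p)\log(1-p)$ as $p-\sum_{k\ge 2}p^k/[k(k-1)]$ with termwise integration justified by the uniform bound $1/[k(k-1)]$, reduce each piece to a Gamma integral via $u=(r/r_0)^\eta$, and control the region where the small-$r$ expansion of $f$ fails by the super-exponential decay of $h_2(p(r/r_0))$ there (the paper cuts at $\sqrt{r_0}$ and uses the Topsøe bound plus incomplete gamma functions, but this is the same idea). Your final partial-fraction rearrangement into the $\zeta(d/\eta+1)-\sum_{k\ge2}k^{-d/\eta}/(k-1)$ form matches the paper's computation exactly.
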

    \begin{proof}
        See Appendix \ref{small_r0_proof}.
    \end{proof}
    The result of this theorem is predicated on the fact that for a suitably regular domain, the small-scale distance density ($f(r)$ for small $r$) behaves like $s_{d-1}r^{d-1}$. For some geometries, we are able to also find the next highest term in the expansion. For example, when $\Omega = [0,1]$, $f(r) = 2-2r$. This means that we can obtain a better approximation to the conditional entropy. Following exactly the same procedure as in the proof of Theorem \ref{small_r0_asymptotic}, we find that for $r_0$ small, 
    \begin{align}
            \label{higher_order}
            \overline{H}(\graph(r_0,\eta)|\mathcal{R}) \approx \frac{s_{d-1}r_0^d}{\eta}\Gamma\left(\frac{d}{\eta}\right)\left(\frac{d}{\eta} + \zeta\left(\frac{d}{\eta}+1\right)-\sum_{k=2}^{\infty}\frac{k^{-\frac{d}{\eta}}}{k-1}\right) \nonumber \\
            + \frac{a_dr_0^{d+1}}{\eta}\Gamma\left(\frac{d+1}{\eta}\right)\left(\frac{d+1}{\eta}+\zeta\left(\frac{d+1}{\eta}+1\right)-\sum_{k=2}^{\infty}\frac{k^{-\frac{d+1}{\eta}}}{k-1}\right)
    \end{align}
    where $a_d$ is the coefficient of $r^d$ in $f(r)$. The asymptotic in the conclusion of Theorem \ref{small_r0_asymptotic} is independent of the choice of $\Omega$, except for the dimension of the space. This kind of result is not unique to the Rayleigh fading connection function. The main idea in the proof is to approximate the pair distance density $f(r)$ with the leading order monomial $f(r) = s_{d-1}r^{d-1}$. The coefficient in Theorem \ref{small_r0_asymptotic} comes from the expression
    \begin{equation}
        \overline{H}(\graph(r_0)|\mathcal{R}) \sim \int_0^{\sqrt{r_0}} s_{d-1}r^{d-1} h_2(p(r/r_0)) dr
    \end{equation}
    After a change of variables $t = r/r_0$ we find that we will see the dimension-dependent growth if and only if
    \begin{equation}
        \label{condition}
        \frac{1}{s_{d-1}r_0^d}\overline{H}(\graph(r_0)|\mathcal{R}) \sim \int_0^{\frac{1}{\sqrt{r_0}}} t^{d-1}h_2(p(t)) dt \in (0,\infty)
    \end{equation}
    As an example, the Rayleigh fading connection functions satisfy this property, as does the Fermi-Dirac connection function. So by the same reasoning as in Theorem 1, there exists a constant $C_{p,d}$ so that the Fermi-Dirac connection function behaves as $C_{p.d}r_0^d$ as $r_0\to0$. The power-law connection function behaves differently. For the power-law connection function
    \begin{equation}
        p_{PL}(r/r_0) = \min(1,(r/r_0)^{-\alpha})
    \end{equation}
    the integral in (\ref{condition}) is
    \begin{equation}
        \int_1^{\frac{1}{\sqrt{r_0}}} t^{d-1}h_2(p_{PL}(t))dt = \int_1^{\frac{1}{\sqrt{r_0}}} t^{d-1}(-t\log t^{-\alpha} - (1-t^{-\alpha})\log(1-t^{-\alpha}))dt
    \end{equation}
    \begin{equation}
        \geq \int_1^{\frac{1}{\sqrt{r_0}}} \alpha t^{d-\alpha-1}\log tdt
    \end{equation}
    which diverges whenever $d \geq \alpha$. We also have
    \begin{equation}
        \int_1^{1/\sqrt{r_0}} t^{d-1}h_2(p_{PL}(t))dt \leq 2\log 2 \int_1^{1/\sqrt{r_0}} t^{d-1-\alpha}dt
    \end{equation}
    which clearly only converges if $d<\alpha$.
    
    We can conclude that for $d \geq \alpha$, the initial rate of increase of the conditional entropy is faster than $r_0^d$, but when $d < \alpha$, the conditional entropy grows at the same rate as we have seen in the Rayleigh fading and Fermi-Dirac connection functions.
    
    We demonstrate the validity of the result in $\Omega = [0,1]^d$, $d=1,2,3$ in Figure \ref{fig:prop_evidence}, which shows the theoretical estimate along with the true (direct numerical integration) value of $\overline{H}(\graph(r_0, 2))$ given by (\ref{eq:entropy_per_edge}).
    \begin{figure}
        \centering
        \includegraphics[width=\linewidth]{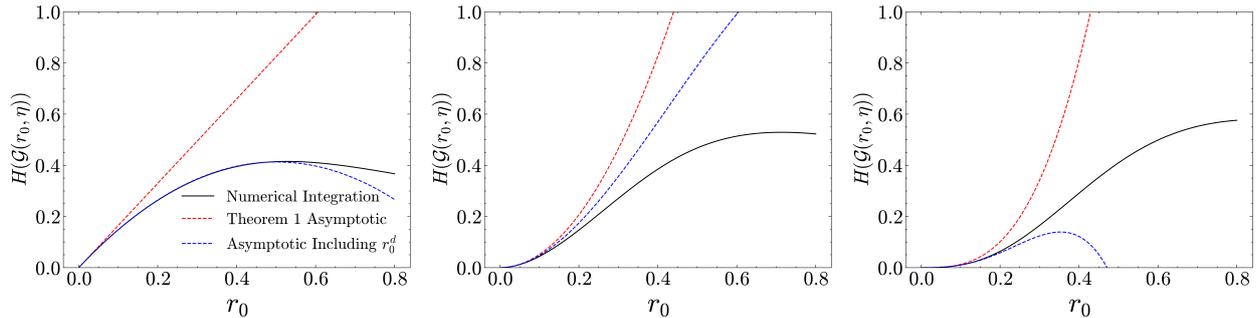}
        \caption{The conditional entropy-per-edge of an SRGG with Rayleigh fading connection function ($\eta=2$), $\overline{H}(\graph(r_0)|\mathcal{R})$, numerically integrated, with the asymptotic from Theorem \ref{small_r0_asymptotic} and equation (\ref{higher_order}) for $\Omega = [0,1]^d$, $d=1,2,3$}
        \label{fig:prop_evidence}
    \end{figure}
    The results in this section show that the limiting behaviour of the conditional entropy as $r_0 \ttz$ is the same for any geometry of the same dimension. Effectively, they say that the small $r_0$ behaviour is determined by the nodes that are far from the boundaries, since the boundary structure of the domain does not influence the limiting entropy. We will explore this idea further in Section \ref{sec:entropy_graph}. To justify the relevance of these results, we note that even for small $r_0$, provided the domain is fixed, the network ensemble is dense as $n\tti$. Figure \ref{fig:prop_evidence} shows that our results are a good estimate of the conditional entropy for approximately $r_0 \in [0, 0.2]$. When $\eta=2$, this corresponds to the typical node connecting to an average of anywhere from around 32\% (in $d=1$) to 3\% (in $d=3$) of the total nodes in the network. So, for short range systems, these results provide practical value. It also appears that these results could be used to derive the scaling as $n\tti$ of \textit{sparse} SRGG ensembles with connection functions of the form $p(r/(s(n)r_0))$, where $s(n) \ttz$ with $n$.

    It is interesting to compare the results of this section to other results about the effect of the ratio $d/\eta$ on other properties of SRGGs with Rayleigh fading connection functions. As an example, in \cite{georgiou2014network}, it is shown that Rayleigh fading improves network connectivity in terms of average degree when $\eta < d$, but deteriorates it when $\eta > d$. In this case, it is clear to see by numerically plotting the asymptotic in the right hand side of Theorem \ref{small_r0_asymptotic} that the conditional entropy of the ensemble for a small, fixed $r_0$ decreases as $\eta$ increases.

    \subsection{Large Connection Range}
    Suppose now that $r_0 \gg D$. The following result shows that there is an explicit dependence on the `moments' of the domain $\Omega$. 
    \begin{theorem}
        \label{thm:large_r0}
        Let $\mathcal{G}(n,r_0,\eta)$ be a SRGG ensemble with a Rayleigh fading connection function on $\Omega$. Then, as $r_0 \tti$,
        \begin{equation}
            \overline{H}(\graph(r_0,\eta)|\mathcal{R}) = \frac{1+\eta \log r_0}{r_0^{\eta}}\mathbb{E}[R^\eta] - \frac{\eta}{r_0^\eta}\mathbb{E}[R^\eta \log R] + \bigO{\frac{1}{r_0^{2\eta}}\log r_0}
        \end{equation}
        where $R$ is a random variable representing the distance between two random points distributed according to $\mu$ in $\Omega$.
    \end{theorem}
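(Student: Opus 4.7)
The plan is to reduce everything to a Taylor expansion in the small parameter $u := (r/r_0)^\eta$, which is uniformly small on $r \in [0,D]$ once $r_0 \gg D$. By (\ref{eq:entropy_per_edge}), the quantity to analyse is
\begin{equation*}
\overline{H}(\graph(r_0,\eta)|\mathcal{R}) = \mathbb{E}\bigl[h_2\bigl(e^{-(R/r_0)^\eta}\bigr)\bigr],
\end{equation*}
so it suffices to expand $h_2(e^{-u})$ for small $u$ and then integrate term by term against $f(r)$ on $[0,D]$.

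The key lemma I would establish is the one-variable asymptotic
\begin{equation*}
h_2(e^{-u}) = -u \log u + u + \rho(u), \qquad |\rho(u)| = \bigO{u^2 |\log u|} \text{ as } u \ttz.
\end{equation*}
To derive this, split $h_2(e^{-u}) = u e^{-u} - (1-e^{-u})\log(1-e^{-u})$. The first piece satisfies $u e^{-u} = u + \bigO{u^2}$ directly from $e^{-u} = 1 - u + \bigO{u^2}$. For the second, write $1-e^{-u} = u\bigl(1 - u/2 + \bigO{u^2}\bigr)$, so that $\log(1-e^{-u}) = \log u + \bigO{u}$, and hence $-(1-e^{-u})\log(1-e^{-u}) = -u\log u + \bigO{u^2|\log u|}$. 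Summing these two pieces gives the claim.

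Substituting $u = (r/r_0)^\eta$ (so that $\log u = \eta\log r - \eta\log r_0$) turns the expansion into
\begin{equation*}
h_2(p(r/r_0)) = \frac{(1+\eta\log r_0)\,r^\eta}{r_0^\eta} - \frac{\eta\, r^\eta \log r}{r_0^\eta} + \rho\bigl((r/r_0)^\eta\bigr),
\end{equation*}
and integrating against $f(r)$ on $[0,D]$ recognises the first two terms as exactly $\tfrac{1+\eta\log r_0}{r_0^\eta}\mathbb{E}[R^\eta]$ and $-\tfrac{\eta}{r_0^\eta}\mathbb{E}[R^\eta\log R]$, matching the statement. The remainder satisfies $|\rho((r/r_0)^\eta)| \leq C r_0^{-2\eta}\bigl(r^{2\eta}\log r_0 + r^{2\eta}|\log r|\bigr)$ uniformly in $r\in(0,D]$, so integrating against $f$ gives the stated $\bigO{r_0^{-2\eta}\log r_0}$ error, provided $\mathbb{E}[R^{2\eta}]$ and $\mathbb{E}[R^{2\eta}|\log R|]$ are finite.

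The main obstacle I anticipate is the logarithmic singularity in $\log u$ at $u=0$, which forces care in the remainder bound and requires confirming that the logarithmic moments $\mathbb{E}[R^\eta \log R]$ and $\mathbb{E}[R^{2\eta}\log R]$ are well defined. Both are guaranteed by Assumption \ref{assumption_omega}: the behaviour $f(r)\sim s_{d-1}r^{d-1}$ as $r\ttz$ (the same input used in Theorem \ref{small_r0_asymptotic}) makes $r^\eta f(r)\log r$ integrable at the origin, and finiteness of the diameter $D$ handles the upper end.
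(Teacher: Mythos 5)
Your proposal is correct and follows essentially the same route as the paper's proof: both expand $h_2$ of the Rayleigh connection function for small argument (the paper in $x=r/r_0$, you in $u=(r/r_0)^\eta$, which is the same expansion $h_2(p(x)) = x^\eta - \eta x^\eta\log x + \bigO{x^{2\eta}\log x}$) and then integrate term by term against $f(r)$. Your explicit check that the logarithmic moments $\mathbb{E}[R^{2\eta}|\log R|]$ are finite is a small point of extra care that the paper leaves implicit, but it does not change the argument.
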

    \begin{proof}
        See Appendix \ref{sec:large_r0}.
    \end{proof}
    For some geometries, the expectations $\mathbb{E}[R^{\eta}]$ and $\mathbb{E}[R^\eta \log R]$ are computable exactly, due to the simple form of $f(r)$, for example, when $\Omega$ is the unit line segment, and the unit torus, where $f(r)=2-2r$ and $f(r)=2$ respectively. For more complicated geometries, we can resort to numerical integration when we have a known expression for $f(r)$, or use a Monte-Carlo estimate of the expectations when $f(r)$ is not available. To demonstrate the validity of our result, Figure \ref{fig:large_r0} shows the analytic estimate against the simulated conditional entropy for $[0,1]^d$ for $d=1,2,3$, and $\eta=2$, and $\Omega$ as the unit ball in 2 and 3 dimensions with $\eta= 4$.
    \begin{figure}
        \centering
        \includegraphics[width=\linewidth]{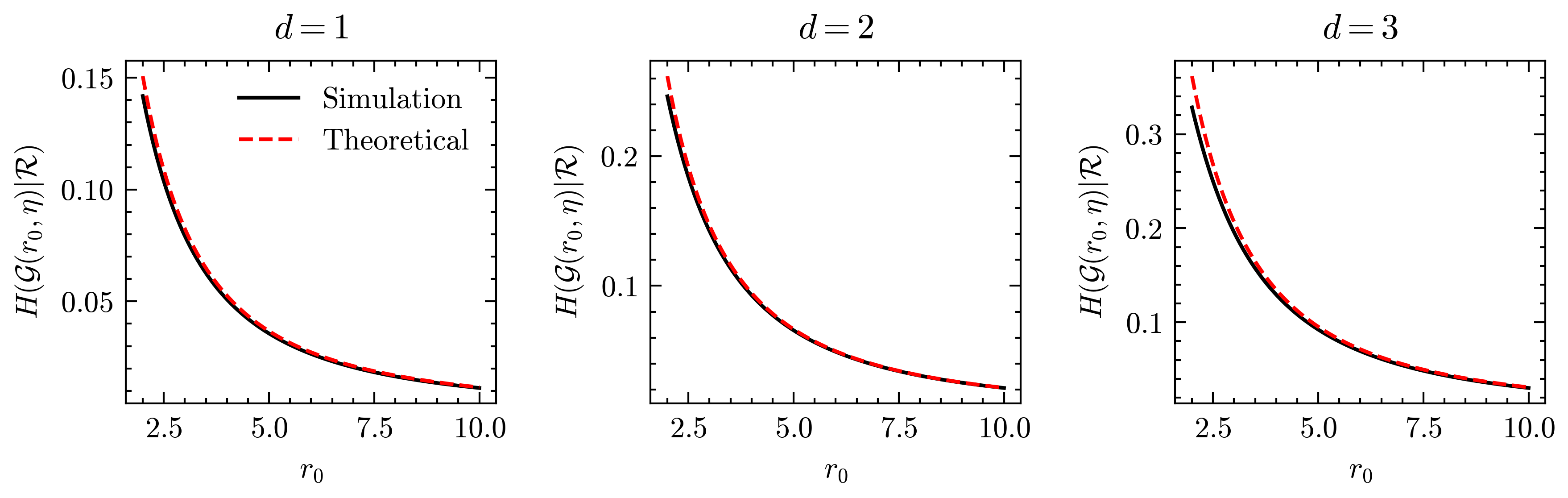}
        \includegraphics[width=0.7\linewidth]{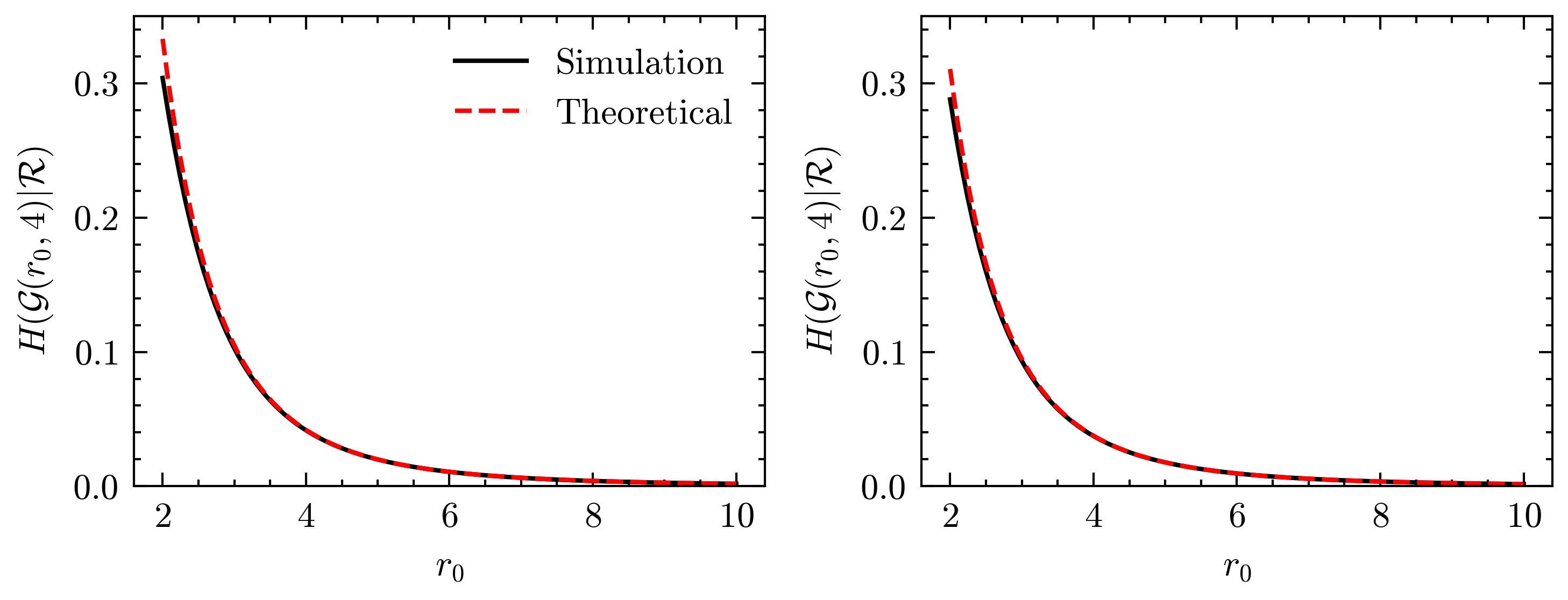}
        \caption{Simulated conditional entropy against the theoretical prediction of Theorem \ref{thm:large_r0} for $\Omega = [0,1]^d$ and $\eta=2$ for $d=1,2,3$ (top left, middle and right respectively), and $\Omega$ as the unit $d$ ball with $\eta=4$ for $d=2, 3$ (bottom left and right respectively).}
        \label{fig:large_r0}
    \end{figure}

    \subsection{Compressibility of SRGGs vs ER Graphs}
    In this section, we will apply the previous two sections in the area of graph compression. It is well known that for any random sequence, the fundamental lower bound for the amount of information required to represent that sequence is the entropy of the distribution that generated it. In our setting, to compress a random network ensemble $\graph$, we require at least $H(\graph)$ bits (or nats since our logarithm is to base $e$) to compress the network. A question we may want to ask is whether the SRGG is any easier to compress than other graph models, for example the Erd\H os-R\'enyi (ER) graph. Of course, the correlation between the edges in the SRGG means it has lower entropy, but can we say anything more? A common quantity in the graph compression literature used to quantify the ease of compressing a graph structure is the `compressibility' of the structure \cite{chierichetti2009models, kontoyiannis2022compression}. 
    \begin{definition}
        Let $\graph_n$ be an ensemble of random graphs, with entropy $H(\graph_n)$, and edge set $E_n$. The compressibility of $\graph_n$ is the ratio
        \begin{equation}
            C_n := \frac{H(\graph_n)}{\mathbb{E}[|E_n|]}
        \end{equation}
        We say an ensemble is compressible if $C_n = \bigO{1}$ as $n\tti$.
    \end{definition}
    It is straightforward to check that the Erd\H os-R\'enyi ensemble is compressible. Indeed,
    \begin{equation}
        C_n^{ER} = \frac{\binom{n}{2}h_2(p)}{\binom{n}{2}p} = \frac{h_2(p)}{p} = \Theta(1)
    \end{equation}
    as $n\tti$. Also, the SRGG is compressible:
    \begin{equation}
        C_n^{SRGG} = \frac{H(\graph_n^{SRGG})}{\binom{n}{2}\bar{p}} \geq \frac{\int_0^D f(r)h_2(p(r))dr}{\bar{p}} = \Theta(1)
    \end{equation}
     as $n\tti$. However, (perhaps counter-intuitively), the \textit{sparse} ER graph is incompressible (see e.g. \cite{kontoyiannis2022compression}). To construct an ER graph with the same expected proportion of links as the SRGG, we simply use $G(n,\bar{p})$. Then we are interested in estimating the quantity
    \begin{equation}
        \Delta C_n := C_n^{ER} - C_n^{SRGG} \geq \frac{h_2(\bar{p}) - \int_0^D f(r)h_2(p(r/r_0))dr}{\bar{p}} \geq 0
    \end{equation}
    The lower bound becomes exact in the large $n$ limit \cite{janson2010graphons}, so we can define the compressibility difference in the large $n$ limit as
    \begin{equation}
        \Delta C := \lim_{n\tti} \Delta C_n = \frac{h_2(\bar{p}) - \int_0^Df(r)h_2(p(r))dr}{\bar{p}} 
    \end{equation}
    Using the asymptotic expressions derived in the previous sections leads to the following result.
    \begin{theorem}
        \label{compressibility_thm}
        Let $\graph$ be an ensemble of SRGGs with a Rayleigh fading connection function with connection range $r_0$, path loss exponent $\eta$, and average connection probability $\bar{p} = \bar{p}(r_0)$. Then the compressibility difference in the large $n$ limit between the SRGG ensemble $\graph$ and $G(n,\bar{p})$, $\Delta C$ satisfies
        \begin{equation}
            \lim_{r_0 \ttz} \Delta C = \infty
        \end{equation}
        \begin{equation}
            \lim_{r_0\tti} \Delta C = 0 
        \end{equation}
    \end{theorem}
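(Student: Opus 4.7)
The plan is to combine the two asymptotic results of Theorems \ref{small_r0_asymptotic} and \ref{thm:large_r0} with parallel asymptotic expansions of $\bar p(r_0)$ and of the binary entropy $h_2$ in the two regimes. In both cases the strategy is to identify the leading-order terms of $h_2(\bar p)$ and of $\int_0^D f(r) h_2(p(r/r_0))\,dr$ separately, and then see how they compare after dividing by $\bar p$.

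For the $r_0 \ttz$ limit, I would first establish that $\bar p = \Theta(r_0^d)$ by mimicking the leading-order argument behind Theorem \ref{small_r0_asymptotic}: substitute $f(r) \sim s_{d-1} r^{d-1}$ near $0$, extend the integral to $\infty$ with a controlled tail, rescale $t = r/r_0$, and recognise a gamma integral, giving $\bar p \sim (s_{d-1}/\eta)\Gamma(d/\eta)\, r_0^d$. Since $\bar p \ttz$, the binary entropy satisfies $h_2(\bar p) = -\bar p \log \bar p - (1-\bar p)\log(1-\bar p) \sim -\bar p \log \bar p \sim d \log(1/r_0)\,\bar p$. By Theorem \ref{small_r0_asymptotic}, the integral subtracted off is $\Theta(r_0^d)$, i.e.\ the same order as $\bar p$ but without the logarithmic factor. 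Hence the numerator $h_2(\bar p) - \int_0^D f(r) h_2(p(r/r_0))\,dr$ is $\Theta(r_0^d \log(1/r_0))$, and dividing by $\bar p = \Theta(r_0^d)$ gives $\Delta C = \Theta(\log(1/r_0)) \tti$.

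For the $r_0 \tti$ limit, I would expand $p(r/r_0) = \exp(-(r/r_0)^\eta) = 1 - (r/r_0)^\eta + O(r_0^{-2\eta})$ uniformly on the bounded domain, so that $\bar p = 1 - \mathbb{E}[R^\eta]/r_0^\eta + O(r_0^{-2\eta})$. Setting $\epsilon := 1 - \bar p$, we have $h_2(\bar p) = -\epsilon\log\epsilon + \epsilon + O(\epsilon^2)$, and substituting the expansion of $\epsilon$ gives
\begin{equation*}
    h_2(\bar p) = \frac{\eta \log r_0}{r_0^\eta}\mathbb{E}[R^\eta] + \frac{1}{r_0^\eta}\mathbb{E}[R^\eta] - \frac{\log \mathbb{E}[R^\eta]}{r_0^\eta}\mathbb{E}[R^\eta] + \bigO{\frac{\log r_0}{r_0^{2\eta}}}.
\end{equation*}
Comparing term-by-term with Theorem \ref{thm:large_r0}, the $(\eta \log r_0)/r_0^\eta$ and $1/r_0^\eta$ pieces multiplying $\mathbb{E}[R^\eta]$ cancel exactly, leaving
\begin{equation*}
    h_2(\bar p) - \int_0^D f(r) h_2(p(r/r_0))\,dr = \frac{1}{r_0^\eta}\bigl(\mathbb{E}[R^\eta \log R^\eta] - \mathbb{E}[R^\eta]\log \mathbb{E}[R^\eta]\bigr) + \bigO{\frac{\log r_0}{r_0^{2\eta}}}.
\end{equation*}
The bracketed constant is non-negative by Jensen's inequality applied to the convex function $x\mapsto x\log x$, which is consistent with $\Delta C \geq 0$. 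Since $\bar p \to 1$, the quotient $\Delta C$ is $O(r_0^{-\eta}) \to 0$.

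The $r_0 \ttz$ direction is essentially routine once the scaling of $\bar p$ is pinned down; the only subtlety is verifying that the $-\bar p\log\bar p$ contribution to $h_2(\bar p)$ genuinely dominates the $\Theta(r_0^d)$ conditional-entropy term, which follows because $\log(1/r_0) \tti$. The main obstacle is the $r_0 \tti$ case, where the leading $\eta r_0^{-\eta}\log r_0\, \mathbb{E}[R^\eta]$ terms in $h_2(\bar p)$ and in $\overline{H}(\graph|\mathcal{R})$ must be tracked carefully enough to see that they cancel; getting the correct order of the error requires carrying the $O(r_0^{-2\eta})$ remainder in the expansion of $\bar p$ through the $-\epsilon\log\epsilon$ term, which produces the $O(r_0^{-2\eta}\log r_0)$ error that safely matches the remainder in Theorem \ref{thm:large_r0}.
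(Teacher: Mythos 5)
Your proposal is correct and follows essentially the same route as the paper's proof: in both regimes you pin down the leading asymptotics of $\bar p$, expand $h_2(\bar p)$, and compare against Theorems \ref{small_r0_asymptotic} and \ref{thm:large_r0}, obtaining $\Delta C = \Theta(\log(1/r_0))$ as $r_0\ttz$ and $\Delta C = \Theta(r_0^{-\eta})$ as $r_0\tti$ after the exact cancellation of the $(1+\eta\log r_0)r_0^{-\eta}\mathbb{E}[R^\eta]$ terms. The Jensen's-inequality remark confirming non-negativity of $\mathbb{E}[R^\eta\log R^\eta]-\mathbb{E}[R^\eta]\log\mathbb{E}[R^\eta]$ is a small addition not present in the paper, but the argument is otherwise the same.
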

    \begin{proof}
        See Appendix \ref{compressibility_proof}.
    \end{proof}
 Intuitively the difference between the small $r_0$ case and the large $r_0$ case is that when $r_0$ is large, the connection function is effectively 1 everywhere, so there is not much difference between the SRGG and the ER graph. However, when $r_0$ is small, the SRGG connection function maintains a very small section where the connection probability is high, and therefore maintains its high clustering which improves the compressibility. We can also say that if the connection function satisfies the integrability condition (\ref{condition}), then the divergence of $\Delta C$ as $r_0\ttz$ will also occur. 

\section{The Entropy Graph}
\label{sec:entropy_graph}
    In this section, we tackle the issue of estimating the conditional entropy-per-edge of the SRGG for general geometries and connection functions. We begin by writing the conditional entropy-per-edge as an expectation over distances.
    \begin{equation}
        \label{entropy_edge_density}
        \overline{H}(\graph(r_0)|\mathcal{R}) = \int_0^D h_2(p(r/r_0))f(r)dr = \mathbb{E}_R[h_2(p(R/r_0))]
    \end{equation}
    Now by analogy with the expression for the expected edge density of the SRGG, $\bar{p} = \int_0^D f(r)p(r)dr$, (\ref{entropy_edge_density}) can be thought of as the expected edge density of a SRGG with connection function $h_2 \circ p$. We will call this graph the `\textit{entropy graph}'. This observation makes the task of estimating $H(\graph(r_0)|\mathcal{R})$ significantly easier, as now instead of requiring the function $f$ and numerically estimating the integral, we can now estimate the conditional entropy by a Monte-Carlo simulation of network edge counts. It also allows us to directly quantify the effect of geometric boundaries on the conditional entropy of the network ensemble.
    
    \subsection{The Effect of Boundaries for Small Connection Ranges}
    The formulation of the conditional entropy-per-edge $H(\graph|\mathcal{R})$ as an edge count of the network allows us to think about the network complexity in a new way. Below, we will discuss the \textit{entropy mass} of a point $x$, which can be thought of as the `amount of uncertainty' a point at $x$ contributes to the overall ensemble entropy.
    \begin{definition}[Entropy Mass]
        For a SRGG ensemble with connection function $p$ on $\Omega$, We define the entropy mass at a point $x \in \Omega$ as
        \begin{equation}
            H_x(\graph|\mathcal{R}) = \int_{\Omega} h_2(p(\|x-y\|))dy
        \end{equation}
    \end{definition}
    Recall that in this section, we will not treat $r_0$ as a variable, and instead treat $r_0$ as a part of the connection function. Hence, we have not included $r_0$ in the definition of entropy mass, and will treat connection functions that have the same functional form but different parameter $r_0$ as different connection functions entirely. It is useful here to draw an analogy with the `connectivity mass' from the SRGG connectivity literature. It is shown by Penrose in \cite{penrose2016connectivity} that the probability of a SRGG with a connection function $p$ subject to some regularity conditions being connected satisfies
    \begin{equation}
        \lim_{n\rightarrow\infty} \left|\prob{G \text{ is connected}} - \exp\left(-n\int_{\Omega} \exp\left(-n\int_{\Omega}p(\|x-y\|)dy\right)dx\right)\right|= 0
    \end{equation}
    Where the \textit{connectivity mass} $M(x) = \int_\Omega p(\|x-y\|)dy$ clearly appears in the expression. We point out that our \textit{entropy mass} is the \textit{connectivity mass} of a SRGG with connection function $h_2\circ p$. Unfortunately, our connection function $h_2 \circ p$ does not satisfy the regularity conditions set out in \cite{penrose2016connectivity} as for example it is non-monotone. However, it is still a useful object to study. In \cite{dettmann2016random}, a general formula for the connectivity mass is given in terms of the boundary components of the domain. The results of the next few sections can be viewed as an application of the methodology of \cite{dettmann2016random} to the problem of graph entropy. This seems to be an unexpected use case for research into connectivity, and prompts the question of whether there are other use cases that this technology can be used for. 
    
    In Figure \ref{fig:localised_entropy_square}, we plot the entropy mass for each point in a unit square $[0,1]^2$, for a connection function $p = p(r/r_0)$ with short ($r_0$ small), medium ($r_0$ maximising the conditional entropy-per-edge) and long range ($r_0 = 1$) connections. We can clearly see that the position of the node greatly impacts its influence on the overall entropy of the ensemble. In particular, in the left panel, (small $r_0$), the entropy mass of the corner nodes is approximately one quarter of that of the bulk nodes. The entropy mass of the node at the centre is given approximately (assuming $h_2(p(1,r_0)) \approx 0$) by integrating the entropy function radially from the centre:
    \begin{equation}
        H_{(1/2,1/2)}(\graph) \approx \int_0^{2\pi}\int_0^1 h_2(p(\|(r\cos\phi,r\sin\phi)\|)) rdrd\phi
    \end{equation}
    \begin{equation}
        = 2\pi \int_0^1 h_2(p(r)) rdr
    \end{equation}
    but a node on the middle of the edge has entropy mass 0 in the direction that points outside of the box, so its region that we need to integrate over is only the semicircle,
    \begin{equation}
        H_{(1/2,0)}(\graph)  \approx \int_0^{\pi}\int_0^1 h_2(p(\|(r\cos\phi,r\sin\phi)\|)) rdrd\phi = \pi\int_0^1 h_2(p(r)) rdrd\phi
    \end{equation}
    and finally by the same reasoning the corner node's mass is
    \begin{equation}
        H_{(1,0)}(\graph) \approx \int_0^{\pi/2}\int_0^1 h_2(p(\|(r\cos\phi,r\sin\phi)\|)) rdrd\phi = \frac{\pi}{2}\int_0^1 h_2(p(r)) rdrd\phi
    \end{equation}
    so we can clearly see the $1:1/2:1/4$ ratio of entropy mass depending on the point's location. Again it is useful to think of this like the connectivity mass of the SRGG. As an example, Figures 1 and 2 in \cite{coon2012full} shows similar behaviour. The interesting feature of the entropy mass is that the dominant contribution \textit{changes} from the bulk to the corners to the edges as the average connectivity probability increases.  
    \begin{figure}
        \centering
        \includegraphics[width=0.32\linewidth]{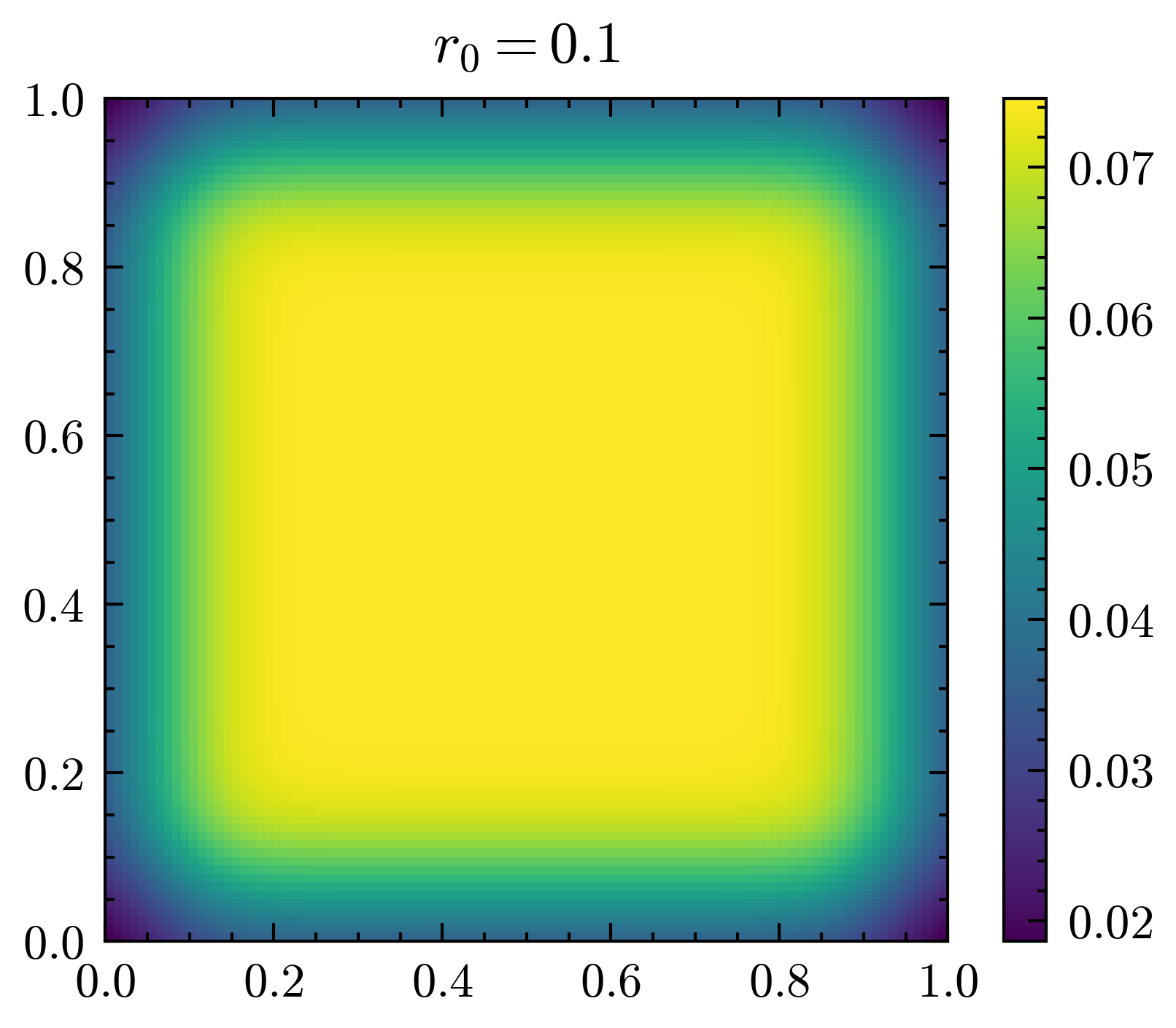}
        \includegraphics[width=0.32\linewidth]{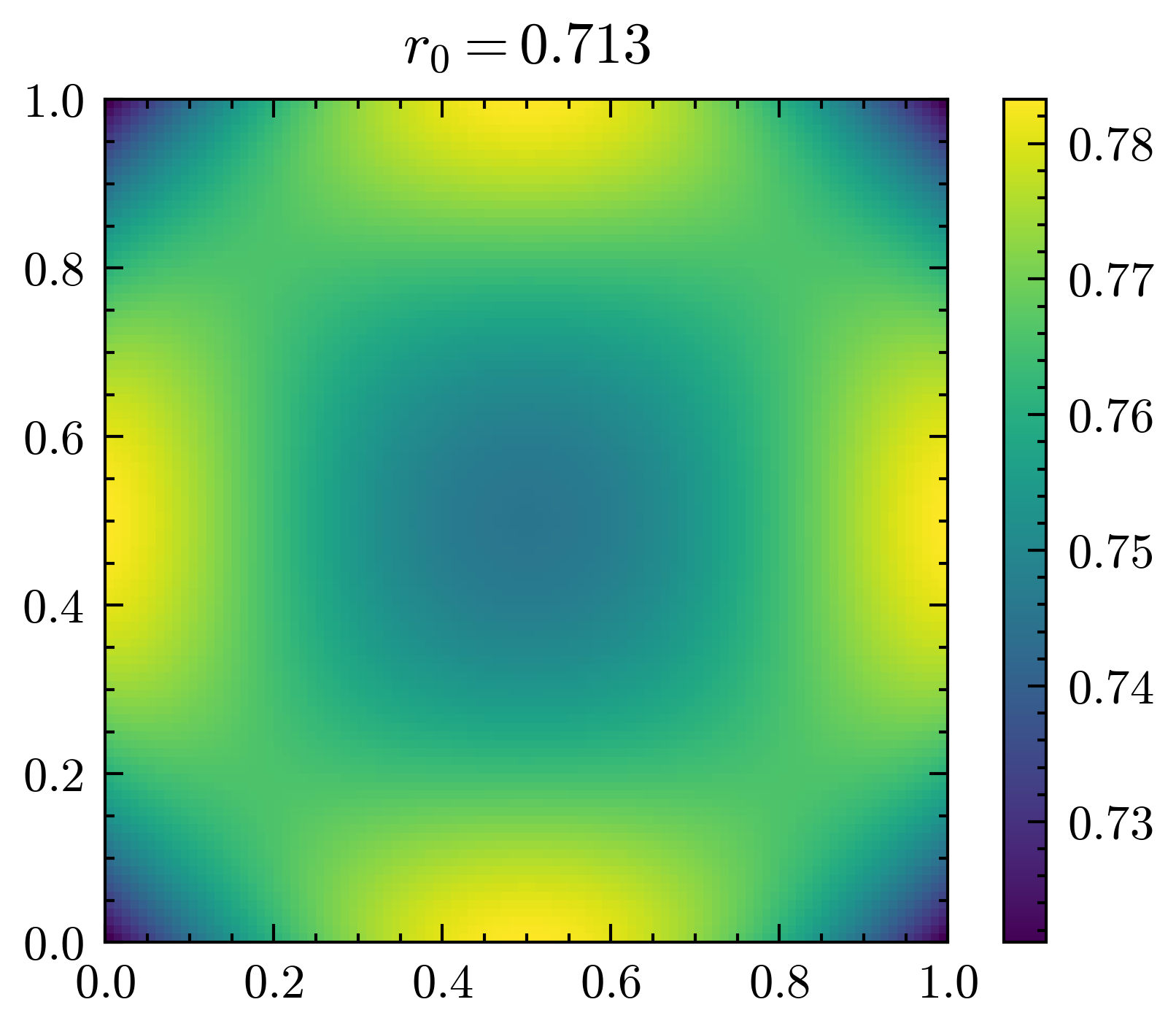}
        \includegraphics[width=0.32\linewidth]{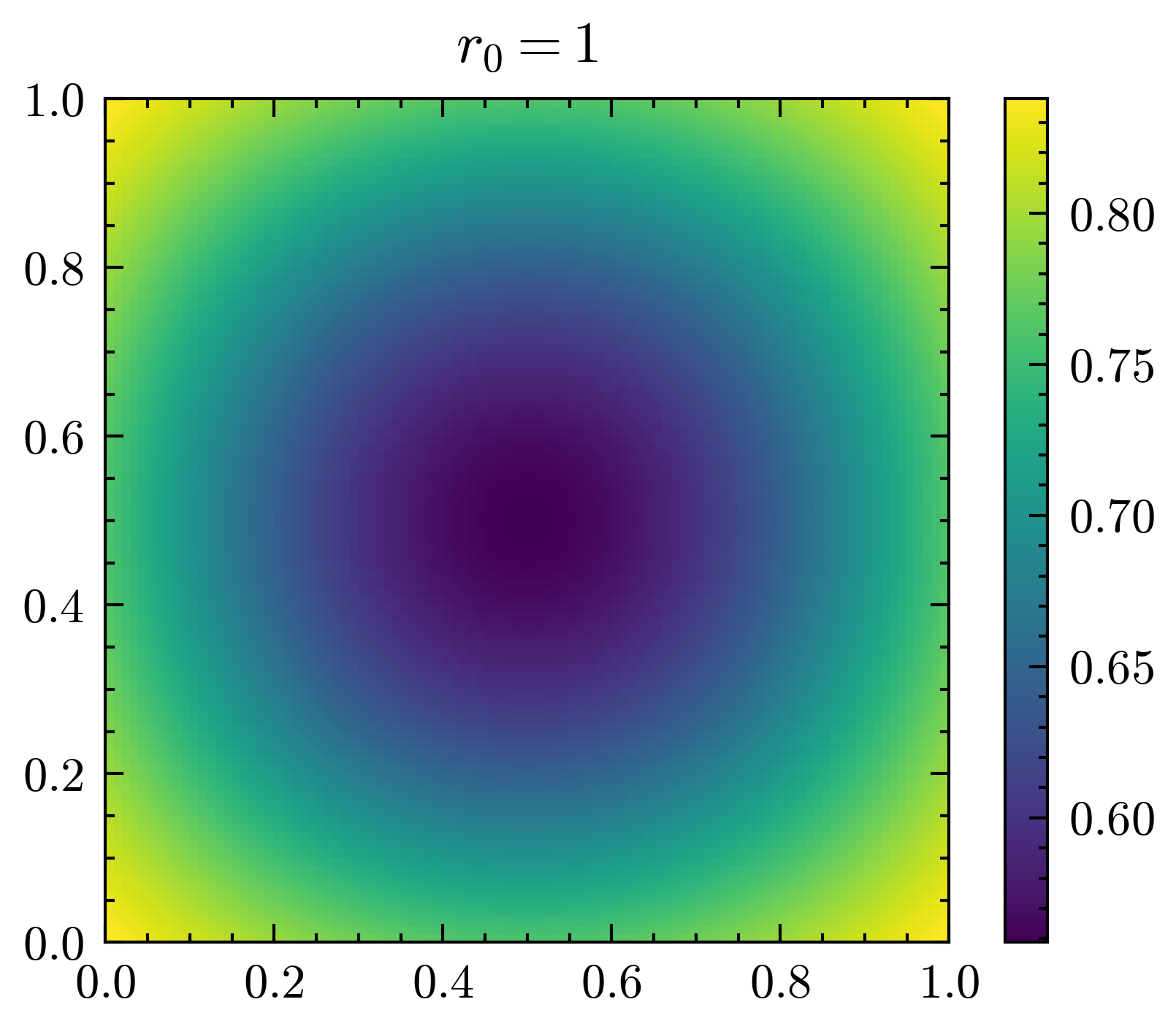}
        \caption{Entropy mass at each point of the unit square. We see that the main contribution to the entropy comes from the bulk, then the edges, then the corners as $r_0$ increases (connection function $p(r/r_0) = \exp(-(r/r_0)^2)$).}
        \label{fig:localised_entropy_square}
    \end{figure}

    In the next section, we aim to quantify the effects of the enclosing geometry on the graph entropy using this idea. We will use two different geometries that each demonstrate different boundary effects.
    
    \subsection{Entropy Estimation for General Geometries}
        Here, we will present two further examples that demonstrate the use of the entropy graph formalism.

        \subsubsection{Wedge - Corner Effects by Connectivity-Style Analysis}
        \label{sec:entropy_mass}
        
        In this section we will follow the methodology of \cite{dettmann2016random} to establish the form of the entropy mass at each point in a wedge. The idea of using a wedge geometry is that it is representative of a general boundary component in a 2D polygonal domain. We define the wedge with angle $\theta$ in polar coordinates as the following set,
        \begin{equation}
            W_{\theta} := \{(r,\phi): 0 \leq \phi \leq \theta, 0 \leq r \leq R\}
        \end{equation}
        In \cite{dettmann2016random}, it is assumed that the connection function $p$ takes the form
        \begin{equation}
            p(r) = p(0) + \sum_{\alpha \in \mathcal{A}} a_{\alpha}r^{\alpha}
        \end{equation}
        for small $r$, where $\alpha, a_{\alpha} \in \mathbb{R}$, and $\alpha > 0$. That is, it has a generalised Taylor expansion for small $r$. However in our setting, if we treat $\rho = h_2 \circ p$ as the connection function, then this is often not satisfied. Instead, our connection function has the following small $r$ expansion.
        \begin{equation}
            \label{eq:general}
            \rho(r) = \rho(0) + \sum_{\alpha \in \mathcal{A}}a_{\alpha}r^{\alpha} + \sum_{\beta \in \mathcal{B}} b_{\beta} r^{\beta}\log r
        \end{equation}
        where again, $b_\beta, \beta \in \mathbb{R}$, and $\beta > 0$. Note that if $p(0) = 1$ then the constant term $\rho(0)$ is 0. We denote by $\alpha_\min$ and $\beta_\min$ the smallest elements of $\mathcal{A}$ and $\mathcal{B}$ respectively. This means that the methods of \cite{dettmann2016random} do not immediately apply, but the general principles still work, with a little extra effort. We assume the following:
        \begin{assumption}
            \begin{enumerate}
                \item The entropy function $\rho$ can be written in the form (\ref{eq:general})
                \item $\rho$ is piecewise smooth with nonsmooth points at a discrete and possibly empty set $\{r_k\}$ for $k = 1,...,K$, where $K \in \mathbb{N}$ is the number of such discontinuities.
                \item The integral $\int_0^\infty r\rho(r)dr$ is finite.
                \item All derivatives of $\rho(r)$ are monotone for sufficiently large $r$.
            \end{enumerate}
        \end{assumption}
        Under this assumption, we will show that the entropy mass for a polar point $(r, \omega)$ in the wedge $W_{\theta}$ situated close to the corner is given by
        \begin{equation}
            \label{eq:entropy_mass_expression}
            H_{(r,\omega)}(\graph) = \theta \rho_1 + \rho_0 r(\sin \omega + \sin \omega') + \rho(0) \frac{r^2}{2}(\sin\omega\cos \omega + \sin \omega'\cos \omega') + \bigO{r^3, r^{\alpha_{\min}+2}, r^{\beta_{\min}+2}\log r} 
        \end{equation}
        where $\omega' = \theta-\omega$, and $\rho_k := \int_0^{\infty} r^k\rho(r)dr$
        The derivation is given in Appendix \ref{sec:wedge}. It is interesting to compare the results to those in \cite{dettmann2016random}. In fact, the form of the entropy mass \eqref{eq:entropy_mass_expression} is exactly the same as the form of the connectivity mass, up to the omitted terms (third order). The first few omitted terms are calculated in the Appendix \ref{sec:corner_mass}, but are very cumbersome so are not reproduced here. It is only at the $\bigO{r^3, r^{\alpha_\min}. r^{\beta_\min}\log r}$ term where we see the first $\log r$ term, which is the first difference between the entropy mass and the connectivity mass.
        \begin{figure}
            \centering
            \includegraphics[width=0.49\linewidth]{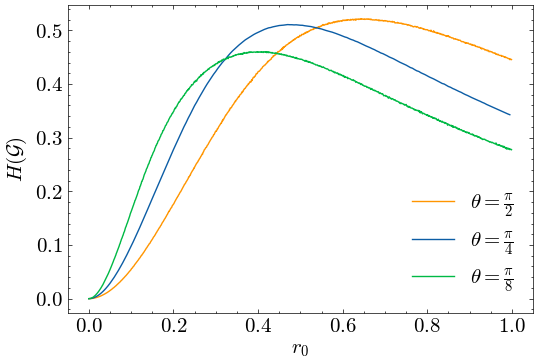}
            \includegraphics[width=0.49\linewidth]{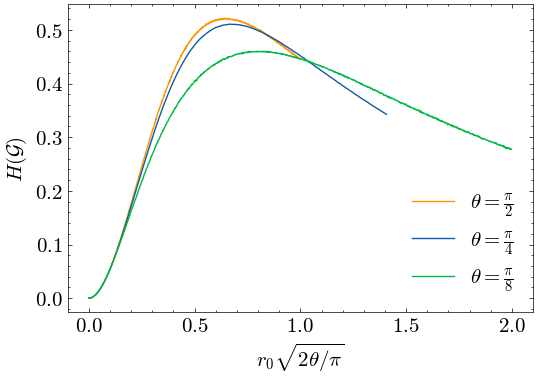}
            \caption{Left: Entropy curves with $\eta = 2$ in sectors of unit radius with angles $\frac{\pi}{2}, \frac{\pi}{4}$ and $\frac{\pi}{8}$, Right: The same curves rescaled by multiplying $r_0$ by $\sqrt{2\theta/\pi}$}
            \label{fig:wedge}
        \end{figure}

        \begin{figure}
            \centering
            \includegraphics[width=\linewidth]{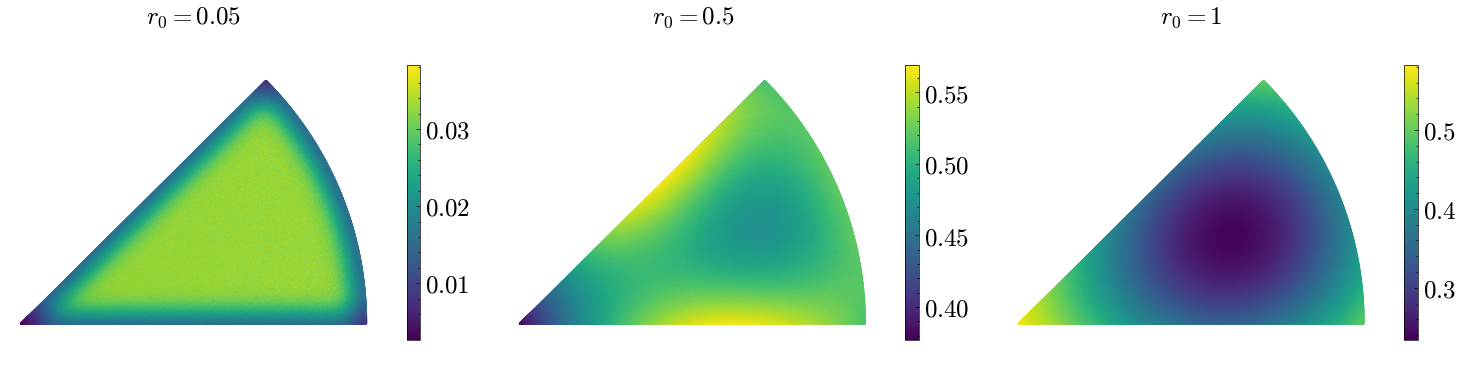}
            \caption{The entropy mass of each point in a wedge of angle $\theta = \pi/4$ for $r_0 = 0.05, 0.5, 1$}
            \label{fig:wedge_mass}
        \end{figure}
        Differentiating this expression with respect to $\omega$ shows that the local optima occur when $\theta = \omega/2$, that is, when the node is most separated from the boundary. Also, the form of (\ref{eq:entropy_mass_expression}) explicitly shows the effect of the angle $\theta$. Taking $\theta = \pi$ and $r = 0$ gives the expression for a node located on an edge component, and explains why the nodes in the corner of Figure \ref{fig:localised_entropy_square} have half the entropy mass of the nodes at the corners ($\theta = \pi/2$, $r=0$), and the corner nodes in panel 1 of Figure \ref{fig:wedge_mass} have one quarter of the entropy mass of the edge nodes (here, $\theta = \pi/4$, $r=0$). It is interesting to see that, again, the dominant contribution to the entropy in Figure \ref{fig:wedge_mass} comes from the bulk, edges and corners in turn as $r_0$ increases. It is likely that this behaviour can be explained by similar methods to the expansion given in equation \eqref{eq:entropy_mass_expression} in different regimes (e.g. $r/R$ approximately constant). 
        
        
         We emphasise that the expressions given here and in Appendix \ref{sec:wedge} may be used to estimate the conditional entropy in very general geometries by decomposing them into boundary components, including extension to three dimensions as is done in \cite{dettmann2016random} for the connectivity probability.

\subsubsection{Cantor Set - Complicated Geometry with Unavailable $f(r)$}
    \label{cantor}

In this section, we consider an SRGG embedded in the Cantor set (or equivalently, an SRGG in $[0,1]$ with Cantor-distributed nodes). We define the Cantor set with ratio $\alpha \in \mathbb{R}$ with $\alpha > 2$ iteratively as follows. We start with $\mathcal{C}_0 := [0,1]$. Then, we remove all but the leftmost and rightmost $\alpha$-ths of the set, so that 
\begin{equation}
    \mathcal{C}_1 := [0,1/\alpha] \cup[1-1/\alpha, 1]
\end{equation}
In general, defining $L(x) = \frac{1}{\alpha}x$ and $R(x) = 1-\frac{1}{\alpha} + \frac{1}{\alpha}x$, we have $\mathcal{C}_k = L(\mathcal{C}_{k-1}) \cup R(\mathcal{C}_{k-1})$, and $\mathcal{C} = \bigcap_{k=1}^{\infty} \mathcal{C}_k$. In Figure \ref{fig:cantor_entropy}, the entropy curves for this choice of $\Omega$ are shown. This is an example of when the entropy graph formalism is useful for estimating conditional entropy when we do not have $f(r)$. This is also an interesting counterexample to the intuition that the entropy curves should be unimodal. The binary entropy function $h_2$ is unimodal, but this does not mean the entropy of the SRGG is.

The goal here is to explain why the conditional entropy of the SRGG in the $\mathcal{C}_k$ for a large $k$ displays these log-periodic oscillations, with a `gradient' of the Hausdorff dimension of the Cantor set, which is $\log 2 / \log (\alpha)$ \cite{falconer1985geometry}. It is important to note that the usual definition of the uniform distribution, that is, taking the normalised Lebesgue measure, will not work here, since the Cantor set is a set of zero Lebesgue measure. Therefore, we will define the uniform distribution on the Cantor set as the normalised $d$-dimensional Hausdorff measure on $\mathcal{C}$, where here $d = \log(2)/\log(\alpha)$ \cite{falconer1985geometry}. For convenience we will denote by $F(r)$ now the CDF of the displacement between two random points, that is, if $X, Y$ are random points in $\mathcal{C}$, then $F$ is the distribution of $R := X-Y$.
\begin{figure}
    \centering
    \includegraphics[width=0.98\linewidth]{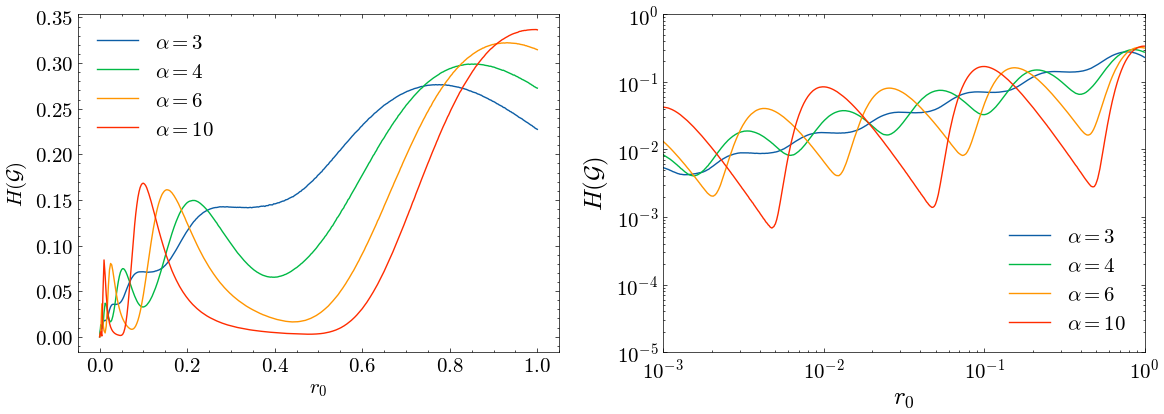}
    \caption{Left: The entropy curve of a SRGG with Rayleigh connection functions with $\eta=4$ for $\alpha = 1/3, 1/4, 1/6, 1/10$. Right: The same curves on a log-log plot. The gradient of the line connecting the local maxima in the log-log plot is $\dim_H(\mathcal{C})$.}
    \label{fig:cantor_entropy}
\end{figure}

Our aim is to explain the log-periodic oscillatory behaviour of the entropy curve in Figure \ref{fig:cantor_entropy}. Provided that the connection function satisfies some decay and differentiability conditions (Assumption \ref{eq:moment_bounded} in Appendix \ref{sec:residue}), we will show that the conditional entropy-per-edge $\overline{H}(\graph(r_0)|\mathcal{R})$ may be written as
\begin{equation}
    \overline{H}(\graph(r_0)|\mathcal{R}) \sim 2r_0^d\left(\frac{R_0}{2} + 2\sum_{m=1}^{\infty} R_m \cos\left(\theta_m + 2\pi m \frac{\log r_0}{\log{\alpha}}\right)\right)
\end{equation}
as $r_0\ttz$ where for $m \in \mathbb{N}_0$, $R_m, \theta_m \in \mathbb{R}$ are constants to be determined. We begin with the function
\begin{equation}
    \overline{H}(\graph(r_0)|\mathcal{R}) = \int_{-1}^{1} h_2(p(|r|/r_0))dF(r) 
\end{equation}
\begin{equation}
    = 2\int_0^1 \rho(r) dF(r)
\end{equation}
where we have introduced the function $\rho(r) = h_2(p(|r|/r_0))$. For $s \in \mathbb{C}$, let $\mathcal{M}[\rho; s] = \int_0^{\infty} \rho(r)r^{s-1}ds$ be the Mellin transform of $\rho$, and for $s \in \mathbb{C}$, define the Cantor moment $C[F;s] := \int_0^1 r^{s}dF(r)$. The next results will allow us to swap the order of integration. They may also be of general use in the study of distance distributions in the Cantor set.

\begin{lemma}
    \label{thm:recursive}
    Let $r \in \mathbb[0,1]$, then
    \begin{equation}
        F(r) = \frac{1}{2}F(\alpha r) + \frac{1}{4}F(\alpha r-(\alpha - 1)) + \frac{1}{4}F(\alpha r+(\alpha-1))
    \end{equation}
\end{lemma}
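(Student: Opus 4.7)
The plan is to exploit the self-similarity of the Cantor set $\mathcal{C} = L(\mathcal{C}) \cup R(\mathcal{C})$ together with the corresponding self-similarity of the normalised $d$-dimensional Hausdorff measure $\mu$, namely that
\begin{equation}
    \mu(A) = \tfrac{1}{2}\mu(L^{-1}(A \cap L(\mathcal{C}))) + \tfrac{1}{2}\mu(R^{-1}(A \cap R(\mathcal{C})))
\end{equation}
for every Borel $A \subseteq \mathcal{C}$. This identity says that a $\mu$-uniform point $X \in \mathcal{C}$ can be generated by first tossing a fair coin $\epsilon_X \in \{0,1\}$ and then drawing an independent $\mu$-uniform $X' \in \mathcal{C}$, with $X = X'/\alpha$ if $\epsilon_X = 0$ and $X = (\alpha-1+X')/\alpha$ if $\epsilon_X = 1$. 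This is the only non-trivial input I need; the rest is a conditioning calculation.

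Given the independent pair $(X,Y)$ in the statement, I would write $X = (\epsilon_X(\alpha-1)+X')/\alpha$ and $Y = (\epsilon_Y(\alpha-1)+Y')/\alpha$ with $\epsilon_X,\epsilon_Y,X',Y'$ mutually independent, $\epsilon_{\cdot} \sim \mathrm{Bernoulli}(1/2)$, and $X',Y' \sim \mu$. Setting $R' := X'-Y'$, which by construction has distribution $F$, the displacement is
\begin{equation}
    R = X - Y = \frac{1}{\alpha}\bigl((\epsilon_X - \epsilon_Y)(\alpha-1) + R'\bigr).
\end{equation}
The difference $\epsilon_X - \epsilon_Y$ takes the values $0$, $+1$, $-1$ with probabilities $1/2$, $1/4$, $1/4$ respectively, and is independent of $R'$. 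Conditioning on this three-way split and collecting terms gives
\begin{equation}
    F(r) = \mathbb{P}(R \le r) = \tfrac{1}{2}F(\alpha r) + \tfrac{1}{4}F(\alpha r - (\alpha-1)) + \tfrac{1}{4}F(\alpha r + (\alpha-1)),
\end{equation}
which is precisely the claim.

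The only delicate point, and the one I would want to justify carefully, is the self-similarity of $\mu$ quoted above. Because $L$ and $R$ are contractions with ratio $1/\alpha$ and their images are disjoint (the open set condition holds trivially via the removed middle interval), the $d$-dimensional Hausdorff measure on $\mathcal{C}$ with $d=\log 2/\log \alpha$ is the unique self-similar measure associated with the iterated function system $\{L,R\}$ with equal weights $(1/2,1/2)$, and it satisfies the displayed invariance. This is standard \cite{falconer1985geometry}, so once stated the conditioning argument is immediate. Note also that the identity holds for all $r\in\mathbb{R}$ if one extends $F$ by $F(r)=0$ for $r<-1$ and $F(r)=1$ for $r>1$; restricting to $r\in[0,1]$ as in the statement loses no information because $F$ is determined on $[-1,0]$ by the symmetry $F(r)+F(-r^{-})=1$ inherited from the exchangeability of $X$ and $Y$.
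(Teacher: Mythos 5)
Your proof is correct and is essentially the same argument as the paper's: the paper encodes the one-step self-similarity via $\alpha$-ary digit expansions $x=\sum_n x_n/\alpha^n$ with i.i.d.\ digits and conditions on the first digit-difference $W_1\in\{1-\alpha,0,\alpha-1\}$ (probabilities $1/4,1/2,1/4$), which is exactly your $(\epsilon_X-\epsilon_Y)(\alpha-1)$, after which the same three-way conditioning yields the recursion. Your version makes the justification of the measure's self-similarity (via the IFS and the open set condition) more explicit than the paper does, but the underlying decomposition is identical.
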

\begin{proof}
See Appendix \ref{lemma1}.
\end{proof}

\begin{lemma}
    \label{thm:moments}
    Let $s \in \mathbb{C}$ with $\Im(s) \neq \frac{2\pi i m}{\log \alpha}$ for any $m \in \mathbb{Z}$. The Cantor distance moments $C[F;s]$ are given by
    \begin{equation}
        C[F;-s] = \frac{1}{(2\alpha^{-s}-1)}\sum_{l=0}^{\infty}\binom{-s}{2l}(\alpha-1)^{-s-2l}C[F;2l]
    \end{equation}
    where for $z \in \mathbb{C}$ and $k \in \mathbb{Z}$, $\binom{z}{k} = \frac{(z)_l}{k!}$, with $(z)_k = z\cdot(z-1)\cdot...\cdot(z-k+1)$ as the P\"ochhammer symbol. Further $C[F;-s]$ is a meromorphic function. 
\end{lemma}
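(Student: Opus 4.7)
The plan is to promote Lemma \ref{thm:recursive} to a self-similarity identity for the underlying probability measure $\mu$ on $[-1,1]$ (the law of $R=X-Y$) and then to integrate $r^{-s}$ against $\mu$ restricted to $(0,\infty)$, obtaining a linear equation for $C[F;-s]$. At the level of measures, Lemma \ref{thm:recursive} reads
\[
\mu = \tfrac{1}{2}\,S_*\mu + \tfrac{1}{4}(T_+)_*\mu + \tfrac{1}{4}(T_-)_*\mu,
\]
where $S(r)=r/\alpha$ and $T_\pm(r)=\pm(\alpha-1)/\alpha + r/\alpha$ are the contracting affine maps underlying the Cantor iterated function system (together with the relative shift between its two branches).

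I would then evaluate $\int_0^\infty r^{-s}\,d\nu(r)$ for each pushforward $\nu$ in turn. The term $S_*\mu$ contributes $\alpha^s C[F;-s]$ by the substitution $r=r'/\alpha$. The term $(T_-)_*\mu$ vanishes: since $\alpha>2$, its support $[-1,-(\alpha-2)/\alpha]$ lies strictly in $(-\infty,0)$. The term $(T_+)_*\mu$, supported on $[(\alpha-2)/\alpha,1]\subset(0,1]$, pulls back to $\alpha^s\int_{-1}^{1}((\alpha-1)+r')^{-s}\,d\mu(r')$. Because $|r'|/(\alpha-1)<1$ when $\alpha>2$, the generalised binomial theorem expands the integrand as a uniformly convergent power series in $r'$, and Tonelli then allows interchanging sum and integration. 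Exchangeability of $X$ and $Y$ together with the reflection symmetry of $\mathcal{C}$ about $1/2$ makes $\mu$ symmetric about the origin, so every odd moment $\int_{-1}^1 (r')^{2l+1}\,d\mu$ vanishes and every even moment equals $2C[F;2l]$; no atom correction at $r=0$ is needed, since $\mathbb{P}(X=Y)=0$ for the Hausdorff-uniform law on $\mathcal{C}$. Collecting the three contributions yields
\[
C[F;-s]\bigl(1-\tfrac{\alpha^s}{2}\bigr) = \tfrac{\alpha^s}{2}\sum_{l=0}^{\infty}\binom{-s}{2l}(\alpha-1)^{-s-2l}C[F;2l],
\]
and multiplying numerator and denominator by $2\alpha^{-s}$ produces the stated formula. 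Meromorphicity then reads off the expression: each summand is an entire function of $s$ (the Pochhammer $\binom{-s}{2l}$ is a polynomial of degree $2l$ in $s$ and $(\alpha-1)^{-s-2l}$ is entire), while the overall prefactor $1/(2\alpha^{-s}-1)$ is meromorphic with simple poles precisely at $s = \log 2/\log\alpha + 2\pi i k/\log\alpha$, $k\in\mathbb{Z}$, which is also the origin of the hypothesis on $\Im(s)$.

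The main obstacle I anticipate is justifying that the series on the right is itself a well-defined analytic function on $\mathbb{C}$ minus this pole lattice, rather than merely a formal expression. The bound $|C[F;2l]|\le 1$, combined with the geometric decay $(\alpha-1)^{-2l}$ (available because $\alpha>2$) and a Stirling-type estimate of the form $|\binom{-s}{2l}|=O(l^{\Re(s)-1}/|\Gamma(-s)|)$ on compacta, should deliver absolute and uniform convergence on compact subsets of $\mathbb{C}$ away from the poles; the Weierstrass theorem then secures analyticity. The same estimates simultaneously justify the sum-integral interchange used in deriving the functional equation, so the analytic and algebraic ingredients align.
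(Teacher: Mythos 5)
Your proposal is correct and takes essentially the same route as the paper's proof: both apply the self-similarity relation of Lemma \ref{thm:recursive} to $\int_0^1 r^{-s}\,dF(r)$, identify the three contributions (one reproducing $\alpha^{s}C[F;-s]$, one vanishing by support since $\alpha>2$, one expanded via the generalised binomial theorem with odd moments cancelling by the symmetry $dF(r)=dF(-r)$), solve the resulting linear equation, and obtain meromorphy from the entire, absolutely convergent series divided by $2\alpha^{-s}-1$. The only cosmetic slip is that the denominator in your binomial-coefficient asymptotic should be $|\Gamma(s)|$ rather than $|\Gamma(-s)|$, which does not affect the convergence argument.
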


\begin{proof}
    See Appendix \ref{lemma3}.
\end{proof}

By Lemma \ref{thm:moments} $C[F;-s]$ has simple poles at $-s_m = -\frac{\log 2}{\log \alpha} - \frac{2\pi im}{\log \alpha}$ for every $m \in \mathbb{Z}$. In particular, this means it is analytic on the half-planes $\Re(s) < \frac{\log 2}{\log \alpha}$ and $\Re(s) > \frac{\log 2}{\log \alpha}$. Using a change of variables $t = r/r_0$, it is easy to see that
\begin{equation}
    \mathcal{M}[\rho; s] = r_0^s\psi(s)
\end{equation}
where $\psi(s) = \int_0^{\infty} t^{s}h_2(p(t))dt$, which we have assumed to be integrable for $\Re(s) > 0$ (Assumption \ref{eq:moment_bounded}). So, for $\Re(s)>0$ we can use the Cauchy-Riemann equations to show that $\psi(s)$ is also analytic.
Then, $\mathcal{M}[\rho;s]$ and $C[F;-s]$ have a common strip of analyticity, and therefore Parseval's theorem for the Mellin transform allows us to write
\begin{equation}
    \label{eq:integral_to_compute}
    \overline{H}(\graph(r_0)|\mathcal{R}) = \frac{1}{\pi i}\int_{c_l-i\infty}^{c_l+i\infty}M[\rho;s]C[F;-s]ds
\end{equation}
where $c_l$ is a real number in the interval $(0, \log(2)/\log(\alpha))$. We may use the residue theorem on a rectangular contour surrounding the poles $s_m$. The details are technical, and so are relegated to Appendix \ref{sec:residue}.

\begin{theorem}
    \label{thm:cantor_entropy_expression}
    The conditional entropy-per-edge of the SRGG with a connection function satisfying Assumption \ref{eq:moment_bounded} in the Cantor set is given by
    \begin{equation}
        \overline{H}(\graph(r_0)|\mathcal{R}) = 2r_0^{d}\left(\frac{R_0}{2} +\sum_{m=1}^{\infty}R_m\cos\left(\theta_m + 2\pi m\frac{\log r_0}{\log \alpha} \right)\right) + o(r_0^d)
    \end{equation}
    as $r_0\ttz$, where $d = \log(2)/\log(\alpha)$, and $R_m$ and $\theta_m$ are the modulus and argument of $\frac{\psi(s_m)}{2\log \alpha}\int_0^1(t+\alpha-1)^{-s_m}dF(r)$ respectively.
\end{theorem}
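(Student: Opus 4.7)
The overall strategy is to apply the residue theorem to the Mellin--Parseval integral representation given in equation \eqref{eq:integral_to_compute}. The plan is to shift the vertical contour $\Re(s)=c_l$ (with $c_l\in(0,d)$, $d=\log 2/\log\alpha$) to the right, past the arithmetic progression of simple poles $\{s_m\}_{m\in\mathbb{Z}}$ of $C[F;-s]$ at $s_m = d+2\pi i m/\log\alpha$, onto a new line $\Re(s)=c_r$ with $c_r > d$ chosen arbitrarily close to $d$.

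First I would establish the decay of both factors in the integrand along horizontal strips. For $\psi(s)=\int_0^\infty t^{s-1}\rho(t)\,dt$, repeated integration by parts using the differentiability of $\rho$ and the eventual monotonicity of its derivatives (Assumption \ref{eq:moment_bounded}) yields polynomial decay of any order in $|\Im(s)|$ on a fixed vertical strip, and in particular shows $\psi(s)$ is analytic on $\Re(s)>0$. For $C[F;-s]$, the representation in Lemma \ref{thm:moments} exhibits it as a bounded analytic factor times $(2\alpha^{-s}-1)^{-1}$; on vertical lines equidistant from consecutive poles, the latter is uniformly bounded, so on such lines $C[F;-s]$ grows at most polynomially. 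This control is what will let the horizontal pieces of the closing contour be discarded.

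Next I would evaluate the residue at each pole. Since $\tfrac{d}{ds}\bigl(2\alpha^{-s}-1\bigr)\bigl|_{s=s_m} = -2\alpha^{-s_m}\log\alpha = -\log\alpha$, the residue of $(2\alpha^{-s}-1)^{-1}$ at $s_m$ is $-1/\log\alpha$. Recognising the series in Lemma \ref{thm:moments} as the even part of a binomial expansion gives
\begin{equation*}
\sum_{l=0}^{\infty}\binom{-s}{2l}(\alpha-1)^{-s-2l}C[F;2l] = \tfrac{1}{2}\int_0^1\!\bigl((r+\alpha-1)^{-s}+(\alpha-1-r)^{-s}\bigr)dF(r),
\end{equation*}
so the residue of the full integrand at $s_m$ equals a complex constant $\kappa_m$ times $r_0^{s_m}=r_0^d e^{2\pi i m \log r_0/\log\alpha}$. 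Since $F$ and $\psi$ are real, the poles at $s_m$ and $s_{-m}$ yield conjugate residues, and summing them collapses each exponential pair into $2R_m\cos(\theta_m+2\pi m\log r_0/\log\alpha)$ with $R_m e^{i\theta_m}$ matching the modulus/argument described in the theorem statement, the $m=0$ pole contributing the $R_0/2$ term.

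Finally I would bound the remainder integral along $\Re(s)=c_r$ by $\mathcal{O}(r_0^{c_r})$ using the Mellin decay of $\psi$ and the analyticity of $C[F;-s]$ in $\Re(s)>d$; since $c_r>d$ is arbitrary, this contributes $o(r_0^d)$. The main technical obstacle is the horizontal closure: because the poles form an infinite arithmetic progression, one cannot close the contour in a single sweep, so the argument must proceed through a sequence of truncated rectangles of half-height $T_N=\pi(2N+1)/\log\alpha$ placed midway between consecutive poles. One must show that the horizontal segments at heights $\pm T_N$ vanish as $N\to\infty$, which combines the polynomial decay of $\psi(c+iT_N)$ in $N$ with the uniform boundedness of $(2\alpha^{-s}-1)^{-1}$ on these midlines, and that the residue series $\sum_m |R_m|$ converges absolutely so that the limit may be identified termwise with the stated Fourier--cosine expansion.
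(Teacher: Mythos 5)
Your proposal follows essentially the same route as the paper: the Mellin--Parseval representation, a rightward contour shift across the poles $s_m$ using rectangles whose horizontal sides sit midway between consecutive poles, vanishing of those horizontal pieces via the decay of $\psi$ and uniform boundedness of $(2\alpha^{-s}-1)^{-1}$ on the midlines, conjugate pairing of residues into the cosine series, and an $\bigO{r_0^{c_r}}=o(r_0^d)$ bound on the remaining vertical integral at $\Re(s)=c_r>d$. The only slight overstatement is claiming polynomial decay of $\psi$ \emph{of any order}: Assumption \ref{eq:moment_bounded} only controls derivatives up to second order, so the paper establishes (and only needs) $\bigO{|T|^{-2}}$.
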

\begin{proof}
    See Appendix \ref{sec:residue}.
\end{proof}

We can make a number of observations. It can be clearly seen that the leading-order behaviour of the conditional entropy as $r_0 \ttz$ is a log-periodic oscillation. Also, this result clearly mirrors Theorem \ref{small_r0_asymptotic}, since the leading order behaviour is a constant multiplied by $r_0^d$, except here $d$ is the Hausdorff dimension of the Cantor set.

\section{Conclusion}
\label{sec:conclusion}
In this paper we have studied the entropy of a family of soft random geometric graph ensembles in the limiting cases of a very small and very large connection range. In Section \ref{sec:scaling} we derived the asymptotic scaling of the conditional entropy for SRGGs with Rayleigh fading connection functions. We showed that in the small $r_0$ limit the conditional entropy scales as $\Theta(r_0^d)$ for general connection functions provided they satisfy an integrability condition. It is very likely that this can be used to derive the limiting conditional entropy for sparse ensembles whose connection ranges vary with $n$. We then used these results to show that the difference in the amount of conditional entropy-per-edge between ER graphs and SRGGs diverges when the probability of a connection gets small, but goes to 0 for large connection ranges. This suggests that algorithms to compress SRGGs should take advantage of the clustering present in the networks. \\

In Section \ref{sec:entropy_graph} we introduced the `entropy graph' as an object to study SRGG entropy. The quantity $H(\graph(r_0)|\mathcal{R})$ may be written as an expectation over distances which corresponds to an edge-level property of the SRGG. This allows us to approach the problem of estimating the conditional entropy of the SRGG as an average degree problem. We demonstrated that this formalism is useful when $f(r)$ is unavailable due to the complexity of the underlying geometry by characterising the conditional entropy in a general boundary component, and in a fractal domain. Potential applications of this idea include designing sensor networks in complicated domains where we want to measure the uncertainty of the global network structure. In particular, it suggests that in a random spatial network, there is location-induced heterogeneity in the amount of information each node requires to represent. Finally, we note that this idea opens up the field of spatial network entropy to approaches involving stochastic geometry, and motivates the study of soft random geometric graphs with non-monotone connection functions. 

\section*{Acknowledgements}

OB acknowledges funding from the EPSRC Centre for Doctoral Training in Computational Statistics and Data Science (COMPASS). 

\appendix
\section*{Appendix}
\section{Section \ref{sec:scaling} Proofs}
\label{sec:proofs}

\subsection{Proof of Theorem \ref{small_r0_asymptotic}}
\label{small_r0_proof}
    To prove this theorem, we will require the following Lemmas.
    \begin{lemma}[\cite{Topsøe2001} Theorem 1.2]
        \label{thm:entroyp_sandwich}
        Let $x \in [0,1]$, then
        \begin{equation}
            (4\log 2) x(1-x) \leq h_2(x) \leq (e \log 2) (x(1-x))^{\frac{1}{\log(4)}}
        \end{equation}
    \end{lemma}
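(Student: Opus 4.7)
The bound is tight at $x = 1/2$, where both sides equal $\log 2$ (note $(1/4)^{1/\log 4} = e^{-1}$), and also at the endpoints $x \in \{0,1\}$, where all three quantities vanish. Combined with the symmetry $h_2(x) = h_2(1-x)$, this tangency structure determines the constants $4\log 2$ and $e\log 2$, and suggests proving each inequality by analysing the appropriate difference function and exploiting matching at $x = 1/2$.

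For the lower bound, my plan is to set $g(x) := h_2(x) - 4(\log 2)\,x(1-x)$ and verify $g(0) = g(1/2) = g(1) = 0$. A direct computation yields $g''(x) = 8\log 2 - 1/(x(1-x))$, which is negative near the endpoints but positive in the central band $x(1-x) > 1/(8\log 2)$. Together with $g'(0^+) = +\infty$ and $g'(1/2) = 0$ (the latter by symmetry), this shows $g$ is increasing and concave near $0$ and convex with a minimum value of $0$ at $x = 1/2$. Hence $g \geq 0$ on $[0, 1/2]$, and by symmetry on $[0,1]$.

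For the upper bound, I would substitute $x = 1/2 + y$ and invoke the identity
\[
h_2(1/2+y) \;=\; \log 2 \;-\; \sum_{k \geq 1}\frac{(2y)^{2k}}{2k(2k-1)},
\]
together with the binomial expansion of $(1-4y^2)^{1/\log 4}$. Writing $u := 4y^2 \in [0,1]$ and dividing through by $\log 2$, the claim reduces to
\[
\sum_{k \geq 1} \frac{u^k}{2k(2k-1)\log 2} \;\geq\; 1 - (1-u)^{1/\log 4}.
\]
The exponent $1/\log 4 = 1/(2\log 2)$ is precisely what makes the $u^1$ coefficients on the two sides agree, reducing the task to a tail comparison.

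The main obstacle is exactly this tail estimate. I would try to establish it term-by-term, showing that for $k \geq 2$ the coefficients of $u^k$ on the left dominate those on the right; this exploits the alternating sign pattern of $\binom{1/\log 4}{k}$ together with $1/\log 4 < 1$. A viable alternative is to study the ratio $\phi(x) := h_2(x)/(x(1-x))^{1/\log 4}$ directly via its logarithmic derivative and show unimodality on $(0,1)$ with maximum $e\log 2$ at $x = 1/2$; the delicacy there lies in controlling $\phi$ near the endpoints, where numerator and denominator vanish at different rates.
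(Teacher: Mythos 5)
The paper does not actually prove this lemma --- it is imported verbatim as Theorem~1.2 of the cited Topsøe reference --- so there is no in-paper argument to compare against; I will only assess your attempt. Your lower-bound half is essentially correct: with $g(x)=h_2(x)-4(\log 2)x(1-x)$ you have $g''(x)=8\log 2-\tfrac{1}{x(1-x)}$, so $g$ is concave on $[0,a]$ and convex on $[a,1/2]$ for a single $a\in(0,1/2)$; convexity on $[a,1/2]$ together with $g'(1/2)=0$ gives $g$ nonincreasing there, hence $g\ge g(1/2)=0$, and concavity on $[0,a]$ with nonnegative endpoint values gives $g\ge 0$ there too. That gluing step is worth writing out, but the argument closes.

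The upper-bound half has a genuine gap: the term-by-term tail comparison you propose as the main route is provably false. Writing $c=\tfrac{1}{\log 4}$, your reduction asks whether $e_k:=\tfrac{1}{2k(2k-1)\log 2}$ dominates $d_k:=(-1)^{k+1}\binom{c}{k}=\tfrac{c(1-c)\cdots(k-1-c)}{k!}>0$ for all $k\ge 2$. This holds for $k=2,\dots,5$ but already fails at $k=6$ ($e_6\approx 0.01093$ versus $d_6\approx 0.01141$), and it fails for \emph{all} large $k$ because $e_k=\Theta(k^{-2})$ while $d_k=\Theta(k^{-1-c})$ with $1+c\approx 1.72<2$. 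There is also a structural obstruction showing no repair of the termwise idea can work: both the original inequality's equality at $x\in\{0,1\}$ (i.e.\ $u=1$) and the telescoping identity $\sum_{k\ge1}\bigl(\tfrac{1}{2k-1}-\tfrac{1}{2k}\bigr)=\log 2$ give $\sum_k e_k=\sum_k d_k=1$, so termwise domination would force $e_k=d_k$ for all $k$, which is false since $e_2\ne d_2$. The inequality $\sum_k e_k u^k\ge\sum_k d_k u^k$ on $[0,1]$ is true but requires genuine cancellation between early surplus and late deficit (e.g.\ a grouping or Abel-summation argument), or else the ratio/unimodality route you mention only as a fallback --- which is closer to how such bounds are actually established in the cited source. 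As written, the proposal does not prove the upper bound.
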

    \begin{lemma}
        \label{thm:int_0_D}
        Let $m \geq 0$, $c > 0$, $D > 0$, $\eta > 0$, then
        \begin{equation}
            \int_0^D r^m \exp\left(-c\left(\frac{r}{r_0}\right)^{\eta}\right) dr = \frac{1}{\eta}r_0^{m+1} c^{-\frac{(m+1)}{\eta}} \gamma\left(\frac{m+1}{\eta}, c\left(\frac{D}{r_0}\right)^\eta\right)
        \end{equation}
        where $\gamma(z, x) = \int_0^x t^{z-1}e^{-t}dt$ is the lower incomplete gamma function.
    \end{lemma}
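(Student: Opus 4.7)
The plan is to prove Lemma \ref{thm:int_0_D} by a direct substitution that converts the integral into the defining integral of the lower incomplete gamma function. There is no real obstacle here; the only thing to be careful about is tracking the powers of $c$ and $r_0$ through the substitution.

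Concretely, I would set $t = c(r/r_0)^{\eta}$. Then $r = r_0(t/c)^{1/\eta}$ and, differentiating, $dr = \frac{r_0}{\eta}c^{-1/\eta} t^{1/\eta - 1}dt$. The bounds transform cleanly: $r=0$ gives $t=0$, and $r=D$ gives $t = c(D/r_0)^{\eta}$. Substituting these into the monomial factor $r^m = r_0^{m}c^{-m/\eta}t^{m/\eta}$ and combining with $dr$, the powers of $r_0$ collect as $r_0^{m+1}$ and the powers of $c$ collect as $c^{-(m+1)/\eta}$, while the $t$-exponent becomes $(m+1)/\eta - 1$.

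This reduces the original integral to
\begin{equation}
\int_0^D r^m \exp\left(-c\left(\tfrac{r}{r_0}\right)^{\eta}\right)dr = \frac{r_0^{m+1}c^{-(m+1)/\eta}}{\eta}\int_0^{c(D/r_0)^{\eta}} t^{(m+1)/\eta - 1} e^{-t}\, dt,
\end{equation}
and the inner integral is exactly $\gamma\bigl((m+1)/\eta,\ c(D/r_0)^{\eta}\bigr)$ by definition of the lower incomplete gamma function. The conditions $m\geq 0$, $c>0$, $\eta>0$ ensure the substitution is a valid $C^1$ diffeomorphism on $(0,D]$ and that the integrand is integrable near $0$, so no limiting argument is required.
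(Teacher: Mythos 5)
Your proposal is correct and uses exactly the substitution $t = c(r/r_0)^{\eta}$ that the paper's proof invokes; you simply carry out the bookkeeping of the powers of $r_0$ and $c$ explicitly where the paper leaves it to the reader. No further comment is needed.
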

    \begin{proof}
        This can be seen by directly integrating the left hand side with the change of variables $t = c\left(\frac{r}{r_0}\right)^{\eta}$.
    \end{proof}
    \begin{proof}[Proof of Theorem \ref{small_r0_asymptotic}]
    
        The aim is to compute the integral
        \begin{equation}
            \overline{H}(\graph(r_0,\eta)|\mathcal{R}) = \int_0^D f(r)h_2(p(r/r_0))dr
        \end{equation}
        for small $r_0$.
        We can split the integral into a contribution from `small' and `large' distances,
        \begin{equation}
            \int_0^D f(r)h_2(p(r/r_0))dr = \int_0^{\sqrt{r_0}} f(r)h_2(p(r/r_0))dr + \int_{\sqrt{r_0}}^D f(r)h_2(p(r/r_0))dr
        \end{equation}
        For each $r \in (0,D]$, $p(r/r_0) \ttz$ as $r_0 \ttz$. Also, we have that $f(r)h_2(p(r/r_0))$ is bounded above by $f(r)$ (which is clearly integrable) for every $r$ and $r_0$. Therefore, by the dominated convergence theorem, the second integral converges to 0 as $r_0 \ttz$. We need that the integral converges to 0 faster than $o(r_0^d)$. In fact, it does so exponentially fast. We have:
        \begin{equation}
            \int_{\sqrt{r_0}}^D f(r)h_2(p(r/r_0))dr \leq \int_{\sqrt{r_0}}^Df(r)dr \cdot \sup_{\sqrt{r_0} < r < D} h_2(p(r/r_0))    
        \end{equation}
        \begin{equation}
            = \prob{R > \sqrt{r_0}} \cdot \sup_{\sqrt{r_0} < r < D} h_2(p(r/r_0))
        \end{equation}
        Using Lemma \ref{thm:entroyp_sandwich}, and noting that the probability must be less than 1, 
        \begin{equation}
             \prob{R > \sqrt{r_0}} \cdot\sup_{\sqrt{r_0} < r < D} h_2(p(r/r_0)) \leq \sup_{\sqrt{r_0} < r < D} e\log(2)(e^{-(r/r_0)^{\eta}}(1-e^{-(r/r_0)^{\eta}}))^{\frac{1}{\log 4}}
        \end{equation}
        but for any fixed $r$ (in particular the maximiser of the above supremum), the upper bound is exponentially decreasing as $r_0 \ttz$, and therefore it is clearly $o(r_0^d)$. This means that the asymptotics of $\overline{H}(\graph(r_0,\eta)|\mathcal{R})$ are the same as the integral $\int_0^{\sqrt{r_0}}f(r)h_2(p(r/r_0))dr$,  so it remains to compute the limit of $\int_0^{\sqrt{r_0}} f(r)h_2(p(r/r_0))dr$ as $r_0 \ttz$. The integrand can be split up into two terms,
        \begin{equation}
            f(r)h_2(p(r/r_0)) = -f(r)p(r/r_0)\log p(r/r_0) - f(r)(1-p(r/r_0))\log (1-p(r/r_0))
        \end{equation}
        and we will deal with the two separate terms in sequence. Integrating both sides of the above, the first term under the specific Rayleigh fading model is
        \begin{equation}
            \int_0^{\sqrt{r_0}}f(r)\left(\frac{r}{r_0}\right)^{\eta}e^{-\left(\frac{r}{r_0}\right)^\eta}dr
        \end{equation}
        If our domain $\Omega$ satisfies Assumption \ref{assumption_omega}, then we have that $f(r) = s_{d-1}r^{d-1}(1+o(r))$ as $r \ttz$. This comes from the analytical treatment in Section 2 of \cite{guinier1955small} which verifies this expansion for geometries satisfying the given assumptions. Higher order terms are available, although are more complicated and require additional assumptions \cite{ciccariello1995integral}. So,
        \begin{equation}
            \int_0^{\sqrt{r_0}}f(r)\left(\frac{r}{r_0}\right)^{\eta}e^{-\left(\frac{r}{r_0}\right)^\eta}dr = \frac{s_{d-1}}{r_0^{\eta}}\int_0^{\sqrt{r_0}}r^{d+\eta-1}e^{-\left(\frac{r}{r_0}\right)^\eta}(1+o(r))dr
        \end{equation}
        which may be easily integrated (by Lemma \ref{thm:int_0_D})
        \begin{equation}
            \int_0^{\sqrt{r_0}}f(r)\left(\frac{r}{r_0}\right)^{\eta}e^{-\left(\frac{r}{r_0}\right)^\eta}dr = \frac{s_{d-1}}{\eta}r_0^d\gamma\left(\frac{d}{\eta}+1, r_0^{-\frac{\eta}{2}}\right) + o(r_0^{d+1})
        \end{equation}
        \begin{equation}
        \label{term1}
            \sim \frac{s_{d-1}}{\eta}r_0^d\Gamma\left(\frac{d}{\eta}+1\right)
        \end{equation}
        as $r_0 \ttz$, which gives us the first term in the theorem. For the second term, we will use a Taylor expansion and justify that we can swap the order of summation and integration. We have,
        \begin{equation}
            -(1-p(r/r_0))\log (1-p(r/r_0)) = (1-p(r/r_0))(p(r/r_0) + \frac{1}{2}p(r/r_0)^2 + ...)
        \end{equation}
        \begin{equation}
            = p(r/r_0) - \sum_{k=2}^{\infty}\left(\frac{1}{k-1}-\frac{1}{k}\right)p(r/r_0)^k
        \end{equation}
        \begin{equation}
            = p(r/r_0) - \lim_{K \tti} \sum_{k=2}^{K}\left(\frac{1}{k-1}-\frac{1}{k}\right)p(r/r_0)^k
        \end{equation}
        Now we can see that
        \begin{equation}
            \sum_{k=1}^K\left|\left(\frac{1}{k-1}-\frac{1}{k}\right)p(r/r_0)^k\right| \leq \sum_{k=1}^K\left(\frac{1}{k-1}-\frac{1}{k}\right)
        \end{equation}
        \begin{equation}
            = 1 - \frac{1}{K} \leq 1
        \end{equation}
        for every $K$. Therefore, the sequence of partial sums is bounded and converges absolutely, and we are able to perform a switch of summation and integration:
        \begin{equation}
            -\int_0^{\sqrt{r_0}} f(r)(1-p(r/r_0))\log (1-p(r/r_0)) dr = \int_0^{\sqrt{r_0}} f(r)\left[p(r/r_0) - \sum_{k=2}^{\infty}\left(\frac{1}{k-1}-\frac{1}{k}\right)p(r/r_0)^k\right]dr
        \end{equation}
        \begin{equation}
            = \int_0^{\sqrt{r_0}} f(r)p(r/r_0)dr - \sum_{k=2}^{\infty}\left(\frac{1}{k-1}-\frac{1}{k}\right)\cdot \int_0^{\sqrt{r_0}} f(r)p(r/r_0)^kdr
        \end{equation}
        \begin{equation}
            = s_{d-1}\int_0^{\sqrt{r_0}} r^{d-1} p(r/r_0) ( 1+ o(r))dr - s_{d-1}\sum_{k=2}^{\infty} \left(\frac{1}{k-1}-\frac{1}{k}\right)\int_0^{\sqrt{r_0}}r^{d-1}p(r/r_0)^k(1+o(r))dr
        \end{equation}
        \begin{equation}
            \label{penultimate}
            \sim \frac{s_{d-1}}{\eta}r_0^d \gamma\left(\frac{d}{\eta}, r_0^{-\frac{\eta}{2}}\right) - \frac{s_{d-1}}{\eta}r_0^d \sum_{k=2}^{\infty}\left(\frac{1}{k-1}-\frac{1}{k}\right)k^{-\frac{d}{\eta}}\gamma\left(\frac{d}{\eta}, kr_0^{-\frac{\eta}{2}}\right)
        \end{equation}
        \begin{equation}
            \label{final_line}
           \sim \frac{s_{d-1}}{\eta}r_0^d \Gamma\left(\frac{d}{\eta}\right)\left(\zeta\left(\frac{d}{\eta}+1\right) -\sum_{k=2}^{\infty}\frac{k^{-\frac{d}\eta}}{k-1}\right)
        \end{equation}
        Combining this with (\ref{term1}) gives the result in the statement of the theorem.
    \end{proof}

\subsection{Proof of Theorem \ref{thm:large_r0}}
\label{sec:large_r0}
\begin{proof}
    We will derive this result using Taylor expansions. We need to find a large $r_0$ expansion of \begin{equation}
        h_2(p(r/r_0)) = -p(r/r_0)\log p(r/r_0) - (1-p(r/r_0))\log (1-p(r/r_0))
    \end{equation}
    which is equivalent to a small $x$ expansion of $h_2(p(x))$ by setting $x=r/r_0$. For small $x$, $\exp(-x^\eta) = 1 - x^{\eta} + \frac{x^{2\eta}}{2} + \bigO{x^{3\eta}}$ and $\log(1-p) = -p-\frac{p^2}{2}+\bigO{p^3}$ for $0<p<1$. Therefore,
    \begin{equation}
        h_2(p(x)) = -e^{-x^{\eta}}\log(e^{-x^{\eta}}) - (1-e^{-x^{\eta}})\log(1-e^{-x^{\eta}})
    \end{equation}
    \begin{equation}
        = x^{\eta}\exp(-x^\eta)-(1-\exp(-x^\eta))\left(-\exp(-x^\eta) - \frac{1}{2} \exp(-2x^\eta) + \bigO{\exp(-3x^\eta)})\right)
    \end{equation}
    \begin{equation}
        = x^\eta-x^{2\eta} - \left(x^\eta -\frac{1}{2}x^{2\eta}+\bigO{x^{3\eta}}\right)\log\left(x^\eta -\frac{1}{2}x^{2\eta} + \bigO{x^{3\eta}}\right) + \bigO{x^{3\eta}}
    \end{equation}
    \begin{equation}
        = x^\eta-x^{2\eta} - \left(x^\eta +\frac{1}{2}x^{2\eta}+\bigO{x^{3\eta}}\right)\left(\log(x^{\eta}) + \log\left(1-\frac{1}{2}x^{\eta}+\bigO{x^{2\eta}}\right)\right) + \bigO{x^{3\eta}}
    \end{equation}
    \begin{equation}
        = x^\eta-x^{2\eta} - \left(x^\eta +\frac{1}{2}x^{2\eta}+\bigO{x^{3\eta}}\right)\left(\log(x^{\eta}) - x^{\eta}  +\bigO{x^{2\eta}}\right) + \bigO{x^{3\eta}}
    \end{equation}
    \begin{equation}
        = x^{\eta} - \eta x^\eta \log(x) + \bigO{x^{2\eta}\log x}
    \end{equation}
    Therefore,
    \begin{equation}
        \overline{H}(\graph(r_0,\eta)|\mathcal{R})  = \int_0^D f(r)h_2(p(r/r_0))dr
    \end{equation}
    \begin{equation}
        = \int_0^D \left(\left(\frac{r}{r_0}\right)^\eta - \eta \left(\frac{r}{r_0}\right)^{\eta}\log \left(\frac{r}{r_0}\right) + \bigO{\left(\frac{r}{r_0}\right)^{2\eta}\log \left(\frac{r}{r_0}\right)} \right)f(r)dr
    \end{equation}
    \begin{equation}
        = \int_0^D \left(\left(\frac{r}{r_0}\right)^{\eta} + \eta\left(\frac{r}{r_0}\right)^{\eta}\log(r_0) - \eta\left(\frac{r}{r_0}\right)^{\eta}\log(r) + \bigO{\left(\frac{r}{r_0}\right)^{2\eta}\log \left(\frac{r}{r_0}\right)}\right) f(r)dr
    \end{equation}
    \begin{equation}
        = \frac{1+\eta\log(r_0)}{r_0^{\eta}}\mathbb{E}[R^{\eta}] - \frac{\eta}{r_0^{\eta}}\mathbb{E}[R^\eta \log R] + \bigO{\frac{1}{r_0^{2\eta}}\log r_0}
    \end{equation}
    as required.
\end{proof}

\subsection{Proof of Theorem \ref{compressibility_thm}}
\begin{proof}
        We need an asymptotic on the value of $\bar{p}$ for small and large $r_0$ to compute this quantity. Following the same reasoning as in Theorem 1, we get for $r_0 \ttz$:
    \begin{equation}
        \bar{p} \sim \int_0^{\sqrt{r_0}}(1+o(1))a_0r^{d-1}p(r/r_0)dr + o(r_0^d)
    \end{equation}
    \begin{equation}
        = \frac{a_0}{\eta}r_0^d \Gamma\left(\frac{d}{\eta}\right) + o(r_0^d)
    \end{equation}
    This implies
    \begin{gather}
        h_2(\bar{p}) \sim -\frac{a_0}{\eta}r_0^d\Gamma\left(\frac{d}{\eta}\right)\log\left(\frac{a_0}{\eta}r_0^d\Gamma\left(\frac{d}{\eta}\right)\right) \nonumber \\
        -\left(1-\frac{a_0}{\eta}r_0^d\Gamma\left(\frac{d}{\eta}\right)\right)\log\left(1-\frac{a_0}{\eta}r_0^d\Gamma\left(\frac{d}{\eta}\right)\right)
    \end{gather}
    \begin{gather}
        = -\frac{a_0}{\eta}r_0^d\Gamma\left(\frac{d}{\eta}\right)\left[\log(r_0^d) +\log\left(\frac{a_0}{\eta}\Gamma\left(\frac{d}{\eta}\right)\right)\right] \nonumber \\
        - \left(1-\frac{a_0}{\eta}r_0^d\Gamma\left(\frac{d}{\eta}\right)\right)\left(-\frac{a_0}{\eta}r_0^d\Gamma\left(\frac{d}{\eta}\right) + \bigO{r_0^{2d}}\right)
    \end{gather}
    \begin{equation}
        \sim \frac{a_0}{\eta}r_0^d\Gamma\left(\frac{d}{\eta}\right)\left(\log(r_0^{-d}) + 1 - \log\left(\frac{a_0}{\eta}\Gamma\left(\frac{d}{\eta}\right)\right)\right)
    \end{equation}
    \begin{equation}
        = \Theta(r_0^d\log(r_0^{-d}))
    \end{equation}
    as $r_0 \rightarrow 0$. Therefore, we know that for small connection ranges the conditional entropy of the ER graph grows faster than the SRGG by a factor of $\log(r_0^{-d})$. Plugging this back into the definition of $\Delta C_n$, we get
    \begin{equation}
        \Delta C_n \sim \frac{\frac{a_0}{\eta}r_0^d\Gamma\left(\frac{d}{\eta}\right)\left(\log(r_0^{-d}) + 1 - \log\left(\frac{a_0}{\eta}\Gamma\left(\frac{d}{\eta}\right)\right)\right) - \int_0^Df(r)h_2(p(r))dr}{\frac{a_0}{\eta}r_0^d\Gamma\left(\frac{d}{\eta}\right)}
    \end{equation}
    Theorem 1 tells us that $\int_0^D f(r)h_2(p(r))dr = \Theta(r_0^d)$ as $r_0 \ttz$, and so
    \begin{equation}
        \Delta C_n = \Theta(\log r_0) 
    \end{equation}
     as $r_0 \ttz$. This means the difference in compressibility diverges in the low edge density limit.
     For large connection ranges,
    \begin{equation}
    \bar{p} = \int_0^Df(r)\exp(-(r/r_0)^{\eta})dr
\end{equation}
\begin{equation}
    = \int_0^Df(r)(1-(r/r_0)^{\eta}+\bigO{(r/r_0)^{2\eta}})dr
\end{equation}
\begin{equation}
    = 1 - \frac{1}{r_0^{\eta}}\mathbb{E}[R^{\eta}] + \bigO{\frac{1}{r_0^{2\eta}}}
\end{equation}
Then after using the Taylor expansion of $\log(1-x)$, we end up with (in a similar way to the large $r_0$ analysis in Theorem 2),
\begin{equation}
    h_2(\bar{p}) = \frac{1}{r_0^{\eta}}\mathbb{E}[R^{\eta}] - \frac{1}{r_0^{\eta}}\mathbb{E}[R^{\eta}]\log\left(\frac{1}{r_0^{\eta}}\mathbb{E}[R^{\eta}]\right) + \bigO{\frac{1}{r_0^{2\eta}}{\log r_0}}
\end{equation}
\begin{equation}
    = \frac{1+\eta\log(r_0)}{r_0^{\eta}}\mathbb{E}[R^{\eta}] - \frac{1}{r_0^{\eta}}\mathbb{E}[R^{\eta}]\log(\mathbb{E}[R^{\eta}]) + \bigO{\frac{1}{r_0^{2\eta}}\log r_0}
\end{equation}
From Theorem 2 we have
\begin{equation}
    \int_0^Df(r)h_2(p(r)) = \frac{1+\eta\log(r_0)}{r_0^{\eta}}\mathbb{E}[R^{\eta}] - \frac{1}{r_0^{\eta}}\mathbb{E}[R^{\eta}\log(R^{\eta})] + \bigO{\frac{1}{r_0^{2\eta}}\log r_0}
\end{equation}
This means that
\begin{gather}
    \bar{p}\Delta C_n \sim \frac{1+\eta\log(r_0)}{r_0^{\eta}}\mathbb{E}[R^{\eta}] - \frac{1}{r_0^{\eta}}\mathbb{E}[R^{\eta}]\log(\mathbb{E}[R^{\eta}]) \nonumber \\ 
    - \frac{1+\eta\log(r_0)}{r_0^{\eta}}\mathbb{E}[R^{\eta}] + \frac{1}{r_0^{\eta}}\mathbb{E}[R^{\eta}\log(R^{\eta})]
\end{gather}
\begin{equation}
    = \frac{1}{r_0^{\eta}}\left(\mathbb{E}[R^{\eta}\log(R^{\eta})] - \mathbb{E}[R^{\eta}]\log(\mathbb{E}[R^{\eta}]) \right)
\end{equation}
So finally, because $1 - \bar{p} \sim \frac{\mathbb{E}[R^{\eta}]}{r_0^{\eta}}$,
\begin{equation}
    \Delta C_n \sim \frac{\frac{1}{r_0^{\eta}}\left(\mathbb{E}[R^{\eta}\log(R^{\eta})] - \mathbb{E}[R^{\eta}]\log(\mathbb{E}[R^{\eta}]) \right)}{1-\frac{\mathbb{E}[R^{\eta}]}{r_0^{\eta}}}
\end{equation}
\begin{equation}
    = \frac{\Theta(1)}{r_0^{\eta}-\Theta(1)} = \Theta(r_0^{-\eta})
\end{equation}
That is, the compressibility difference goes to 0.
\end{proof}
\label{compressibility_proof}

\section{Section \ref{sec:entropy_graph} Derivations and Proofs}
\label{sec:wedge_and_cantor}
\subsection{Derivation of Entropy Mass Expansion in the Wedge}
\label{sec:wedge}
We will follow the methodology of \cite{dettmann2016random} for our modified connection function $\rho(r) = h_2(p(r))$.
\subsubsection{Integration on a Non-Centred Line}
As briefly described, we must compute the entropy mass $H_x(\graph)$ near each boundary component. For the wedge, this corresponds to the bulk, the edges and the corners. All of these will rely on the following integral, where the entropy mass is integrated along a line passing a small distance $x$ from the node in question.
\begin{equation}
    F(x) = \int_0^{\infty} \rho(\sqrt{x^2+t^2})dt
\end{equation}
For $\epsilon > 0$, we can write this as $F(x) = f(x,\epsilon) + F(x,\epsilon)$ where
\begin{align}
    f(x,\epsilon) = \int_0^{\epsilon} \rho(\sqrt{x^2+t^2}) dt  \nonumber \\
    F(x,\epsilon) = \int_{\epsilon}^{\infty} \rho(\sqrt{x^2+t^2})dt
\end{align}
and we assume that $\epsilon \gg x^k$ for every $k\in \mathbb{N}$. First, we will deal with $f(x,\epsilon)$. Let $s = \sqrt{t^2/x^2 + 1}$, then,
\begin{equation}
    f(x,\epsilon) = \int_1^{\sqrt{\epsilon^2/x^2+1}} \rho(xs)\frac{xs\space ds}{\sqrt{s^2-1}}
\end{equation}
Substituting our expression for $\rho$ with small $r$, we have
\begin{equation}
    f(x,\epsilon) = \epsilon \rho(0) + \sum_{\alpha \in \mathcal{A}}a_{\alpha}\int_1^{\sqrt{\epsilon^2/x^2+1}} (xs)^{\alpha}\frac{xs \space ds}{\sqrt{s^2-1}}+ \sum_{\beta \in \mathcal{B}}b_{\beta}\int_1^{\sqrt{\epsilon^2/x^2+1}} (xs)^{\beta} \log(xs) \frac{xs \space ds}{\sqrt{s^2-1}}
\end{equation}
For the first sum, we can directly use the result of \cite{dettmann2016random}, which gives
\begin{equation}
    \sum_{\alpha \in \mathcal{A}}a_{\alpha}\int_1^{\sqrt{\epsilon^2/x^2+1}} (xs)^{\alpha}\frac{xs \space ds}{\sqrt{s^2-1}} = \sum_{\alpha \in \mathcal{A}} a_{\alpha}f_{\alpha}(x,\epsilon)
\end{equation}
where 
\begin{equation}
    f_{\alpha}(x,\epsilon) = \frac{\sqrt{\pi}}{2}\frac{\Gamma(-\frac{\alpha+1}{2})}{\Gamma(-\frac{\alpha}{2})}x^{\alpha+1} + \frac{\epsilon^{\alpha+1}}{\alpha+1} + \frac{\alpha \epsilon^{\alpha-1}}{2(\alpha-1)}x^2 + \frac{\alpha(\alpha-2)\epsilon^{\alpha-3}}{8(\alpha-3)}x^4 + o(x^4)
\end{equation}
except when $\alpha =1$ or $\alpha =3$, where
\begin{align}
    f_1(x,\epsilon) = \frac{\epsilon^2}{2} + \frac{1}{2}\log\left(\frac{2\sqrt{e}\epsilon}{x}\right)x^2 + \frac{x^4}{16\epsilon^2} + o(x^4) \nonumber \\
    f_3(x,\epsilon) = \frac{\epsilon^4}{4} + \frac{3\epsilon^2}{4}x^2 + \frac{3}{8}\log\left(\frac{2e^{3/4}\epsilon}{x}\right)x^4 + o(x^4)
\end{align}
So we are left to expand the second sum in this manner to get a full expression for $f(x,\epsilon)$. Expanding the denominator in the integral gives
\begin{gather}
    \sum_{\beta \in \mathcal{B}}b_{\beta}\int_1^{\sqrt{\epsilon^2/x^2+1}} (xs)^{\beta} \log(xs) \frac{xs \space ds}{\sqrt{s^2-1}} \nonumber \\ = \sum_{\beta \in \mathcal{B}}b_{\beta}x^{\beta+1}\int_1^{\sqrt{\epsilon^2/x^2+1}} s^{\beta} \log(xs) \left(1+\frac{1}{2s^2} + \frac{3}{8s^4} + \frac{5}{16s^6} + \sum_{k=4}^\infty \frac{(2k)!}{(k!)^22^{2k}}s^{-2k}\right)ds
\end{gather}
which is a sum over integrals of the form $\int s^{\beta-2k}\log(xs)ds$. We therefore must be careful when $\beta-2k = -1$. For now, assume that each $\beta$ is not an odd integer. Then
\begin{align}
    \label{eq:integral_of_Taylor}
    \sum_{\beta \in \mathcal{B}}b_{\beta}\int_1^{\sqrt{\epsilon^2/x^2+1}} (xs)^{\beta} \log(xs) \frac{xs \space ds}{\sqrt{s^2-1}} \nonumber \\ = \sum_{\beta \in \mathcal{B}}b_{\beta}x^{\beta+1}\left[\frac{s^{\beta+1}((\beta+1)\log(xs) - 1)}{(\beta+1)^2} + \frac{s^{\beta-1}((\beta-1)\log(xs) - 1)}{2(\beta-1)^2} \right. \nonumber \\ \left. +\frac{3s^{\beta-3}((\beta-3)\log(xs) - 1)}{8(\beta-3)^2} + o(s^{\beta-3})\right]_{1}^{\sqrt{\epsilon^2/x^2+1}}
\end{align}
Then one can follow the expansion, and collect terms to result in
\begin{align}
     \sum_{\beta \in \mathcal{B}}b_{\beta}\int_1^{\sqrt{\epsilon^2/x^2+1}} (xs)^{\beta} \log(xs) \frac{xs \space ds}{\sqrt{s^2-1}}ds = \sum_{\beta \in \mathcal{B}}b_{\beta}g_{\beta}(x,\epsilon)
\end{align}
where for $\beta \neq 1, 3$
\begin{align}
    g_{\beta}(x,\epsilon) = C_1(\beta)x^{\beta+1} - C_2(\beta)x^{\beta+1}\log(x) - \frac{1}{(\beta+1)^2}\epsilon^{\beta+1} + \frac{1}{\beta+1}\epsilon^{\beta+1}\log \epsilon \nonumber \\
    - \frac{1}{2(\beta-1)^2}x^2\epsilon^{\beta-1} + \frac{\beta}{2(\beta-1)}x^2\epsilon^{\beta-1}\log \epsilon + \frac{\beta^2-6\beta+6}{8(\beta-3)^2}x^4\epsilon^{\beta-3} + \frac{\beta(\beta-2)}{8(\beta-3)}x^4\epsilon^{\beta-3}\log \epsilon + o(x^4) 
\end{align}
with 
\begin{align}
    C_1(\beta) = \frac{\sqrt{\pi}}{2}\frac{\Gamma(\frac{1-\beta}{2})\left((\beta+1)\psi^{(0)}(-\frac{\beta}2{) - (\beta+1)\psi^{(0)}(\frac{1-\beta}2{)-2}}\right)}{2(\beta+1)^2\Gamma(-\frac{\beta}{2})} \nonumber \\
    C_2(\beta) = \frac{\sqrt{\pi}}{2}\frac{\Gamma(-\frac{\beta+1}{2})}{\Gamma(-\frac{\beta}{2})}
\end{align}
and where $\psi^{(0)}$ is the digamma function. For $\beta=1,3$ we need to compute the values of $g_\beta$ explicitly, since the general formula in (\ref{eq:integral_of_Taylor}) does not hold (since we integrate $\log(s)/s$). We have
\begin{align}
    g_1(x,\epsilon) = -\frac{\epsilon^2}{4} + \frac{\epsilon^2 \log \epsilon}{2} + \frac{x^2}{2}\left(\frac{1}{2} + \log \epsilon + \frac{1}{2}\log^2\epsilon + \frac{3}{16}{_4}F_3\left(1,1,1,\frac{5}{2};2,2,3;1\right) \right) \nonumber \\  
    \frac{x^2}{2}\log (x) \log\left(\frac{2\sqrt{e}}{x^{3/2}}\right) + \frac{1}{32}\frac{x^4}{\epsilon^2}  + \frac{1}{16}\frac{x^4}{\epsilon^2}\log \epsilon+ o(x^4) \nonumber \\
    g_3(x,\epsilon) = -\frac{1}{16}\epsilon^4 + \frac{\epsilon^4 \log \epsilon}{4} - \frac{1}{8}x^2\epsilon^2 + \frac{3}{4}x^2\epsilon^2\log \epsilon  \nonumber \\
    + x^4 \left(\frac{1}{2}\log \epsilon + \frac{3}{16}\log^2 \epsilon + \frac{5}{16}+\frac{5}{64}{_4}F_3\left(1,1,1,\frac{7}{2};2,2,4;1\right) \right) - x^4\log x \left(\frac{25}{32}+\frac{3}{8}\log 2\right) + o(x^4 \log x)
\end{align}
where ${_p}F_q$ is the hypergeometric function. So
\begin{equation}
    f(x,\epsilon) = \epsilon\rho(0) + \sum_{\alpha \in \mathcal{A}} a_{\alpha}f_{\alpha}(x,\epsilon) + \sum_{\beta \in \mathcal{B}}b_{\beta}g_{\beta}(x,\epsilon)
\end{equation}
For $F(x,\epsilon)$, we may use exactly the same method as \cite{dettmann2016random}, performing a Taylor expansion of $\rho(\sqrt{x^2+t^2})$ under the integral sign by explicitly taking derivatives with respect to $x^2$. For $k \in \mathbb{Z}$, define
\begin{align}
    \rho_k = \int_0^{\infty} r^k\rho(r)dr \nonumber \\
    \rho'_k = \int_0^{\infty} r^k \rho'(r)dr \nonumber \\
    \rho_k'' = \int_0^{\infty} r^k\rho''(r)dr
\end{align}
where $\rho'$ indicates the derivative with respect to $r$, and define their incomplete moments
\begin{equation}
    \rho_k(\epsilon) = \int_{\epsilon}^{\infty} r^k\rho(r)dr
\end{equation}
where the respective moments of the derivatives are defined as expected.
Also, define
\begin{align}
    \Delta_m = \sum_{k \in \mathcal{K}} r_k^m(\rho(r_k+) - \rho(r_k-)) \nonumber \\
    \Delta_m' = \sum_{k \in \mathcal{K}} r_k^m(\rho'(r_k+) - \rho'(r_k-))
\end{align}
where $\rho(r_k+) = \lim_{r\to r_k^+} \rho(r)$ (and similarly for $\rho(r_k-)$) to be the contribution from discontinuities. Then we by splitting the integral at the discontinuities and directly integrating we have
\begin{equation}
    F(x,\epsilon) = \rho_0(\epsilon) + \frac{x^2}{2}[\rho'_{-1}(\epsilon) + \Delta_{-1}] + \frac{x^4}{8}[\rho''_{-2}(\epsilon) - \rho'_{-3}(\epsilon) + \Delta'_m + \Delta_{-3}] + o(x^4)
\end{equation}
As in \cite{dettmann2016random}, note that $F(x) = f(x,\epsilon) + F(x,\epsilon)$ has the same value for every $\epsilon$. In particular, we may take the limit $\epsilon\ttz$, and obtain

\begin{align}
    F(x) = \rho_0 +\sum_{\alpha \neq 1,.3} a_{\alpha}C_2(\alpha)x^{\alpha+1} + \sum_{\beta \neq 1,3} b_{\beta}(C_1(\beta) - C_2(\beta)\log x)x^{\beta+1} 
    \nonumber \\ + \frac{x^2}{2}\left[\tilde{\rho}_1 - a_1\log(|a_1|x) + b_1\log(x) \log\left(\frac{2\sqrt{e}}{x^{3/2}}\right) \right] \nonumber \\ + \frac{x^4}{8}\left[\tilde{\rho}_3 + b_3\left(\left(\frac{25}{4} + 3\log 2 \right)\log x + \frac{5}{2} + \frac{5}{8}{_4}F_3\left(1,1,1,\frac{7}{2};2,2,4;1\right)\right) - 3a_3\log(|a_3|x)\right] + o(x^4\log x)
\end{align}

Where
\begin{align}
    \tilde{p}_1 = \lim_{\epsilon \to 0} \left(\rho_{-1}(\epsilon) + \Delta_{-1} + \sum_{\alpha \neq 1} a_{\alpha}\frac{\alpha\epsilon^{\alpha-1}}{\alpha-1} + \sum_{\beta \neq 1} b_{\beta}\epsilon^{\beta-1}\left(\frac{\beta \log \epsilon}{\beta-1}-\frac{1}{(\beta-1)^2}\right)\right.  \nonumber \\
     \left.+ a_1\log \left({2|a_1|\sqrt{e}\epsilon}\right) + b_1\left(\frac{1}{2} + \log \epsilon + \frac{1}{2}\log^2\epsilon + \frac{3}{16}{_4}F_3\left(1,1,1,\frac{5}{2};2,2,3;1\right)\right)\right)
\end{align}
\begin{align}
    \tilde{p}_3 = \lim_{\epsilon\to 0} \left(\rho''_{-2}(\epsilon) - \rho'_{-3}(\epsilon) + \Delta'_{-2}+\Delta_{-3} + \sum_{\alpha \neq 3} a_{\alpha}\epsilon^{\alpha-3}\frac{\alpha(\alpha-2)}{(\alpha-3)} +3a_3\log\left(2|a_3|e^{3/4}\epsilon\right) \right. \nonumber \\
    \left.+ \sum_{\beta \neq 3} b_{\beta}\epsilon^{\beta-3}\left[\frac{\beta(\beta-2)}{(\beta-3)}\log \epsilon + \frac{\beta^2-6\beta+6}{(\beta-3)^2}\right] +b_3\left[ 4\log\epsilon + \frac{3}{2}\log^2\epsilon   \right] \right)
\end{align}
Note these limits must exist because the limit as $\epsilon\to 0$ of $f(x,\epsilon) + F(x,\epsilon)$ exists and is equal to $F(x)$. This gives us the form of $F(x)$ in full generality. Next, we can apply this to wedge.

\subsubsection{Entropy Mass in a Corner}
\begin{figure}
    \centering
    \includegraphics[width=0.8\linewidth]{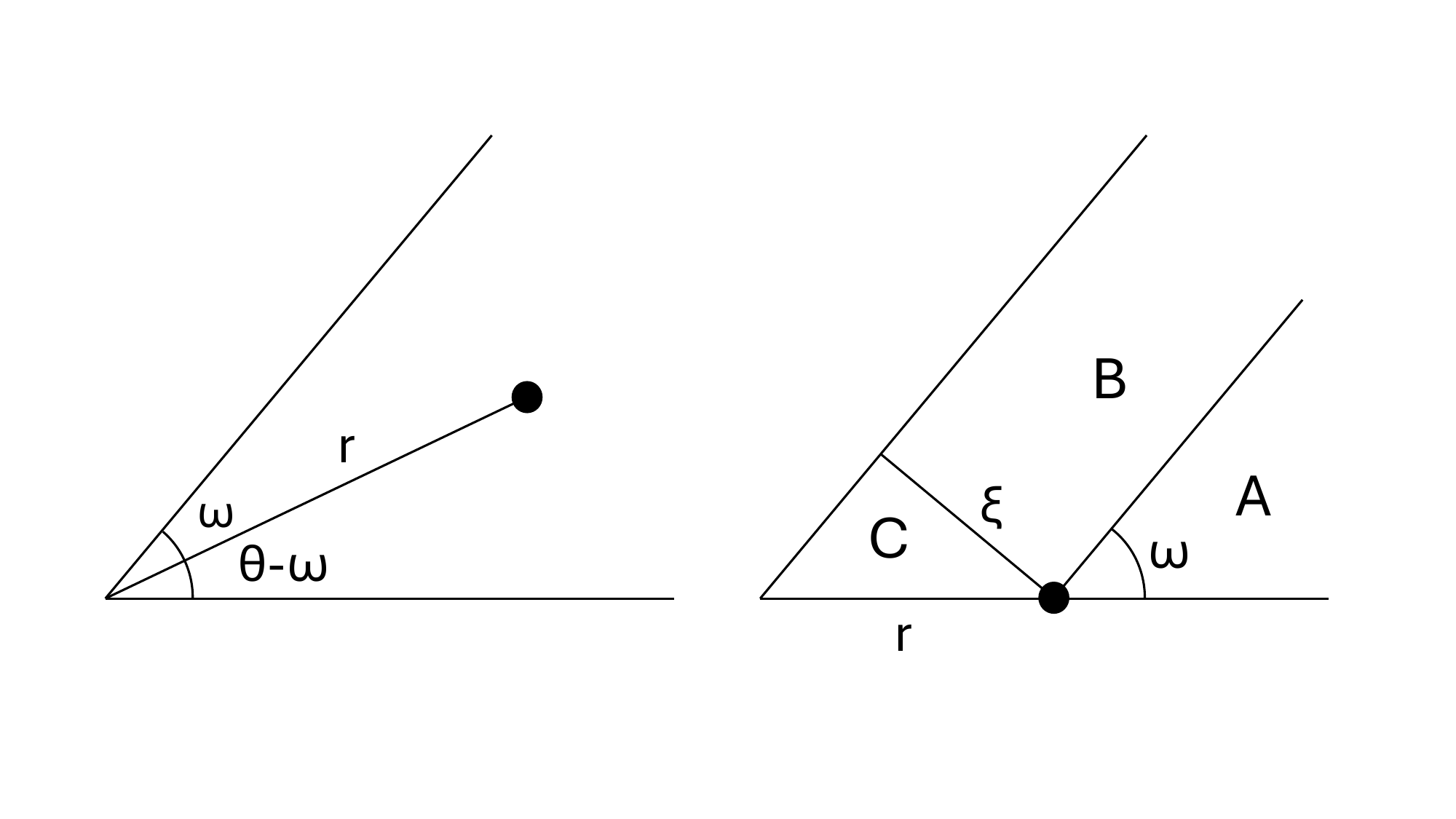}
    \vspace{-4em}
    \caption{Reproduced Figure 2 from \cite{dettmann2016random}, explaining how we split up the wedge to compute the entropy mass. The left panel shows the whole wedge and a polar point $(r, \omega) \in W_{\theta}$. The right panel shows the top part of the wedge so that the point is on the boundary of the wedge $W_{\omega}$. The letters A,B and C indicate the regions that we integrate the entropy mass over in equation \eqref{eq:split_it_up}.}
    \label{fig:wedge_split}
\end{figure}
\label{sec:corner_mass}
Following the method from \cite{dettmann2016random}, but with our form of $F$, we can write the entropy mass of a point $x$ in a wedge with angle $\theta$, denoted $M_{\theta}(x)$ as the sum from three contributions, the bulk, the edges and the corners. First, we will start in a wedge of angle $\omega$ and assume $x$ is on the boundary, and then add on the entropy mass from a wedge of angle $\omega' = \theta - \omega$ to recover the entropy mass of a full point. 
\begin{equation}
    \label{eq:split_it_up}
    H_{x}(\graph) = H_x^A(\graph)+H_x^B(\graph)+H_x^C(\graph)
\end{equation}
Consider the wedge $W_{\omega}$, and put the point $x$ on the lower boundary at a distance $r$ from the corner. Then, with $\xi$ as the perpendicular distance from the point $x$ to the top boundary (see Figure \ref{fig:wedge_split})
\begin{equation}
    H_{\xi}^A(\graph) := \int_0^{\omega}\int_0^\infty \rho(r)dr = \omega \rho_1
\end{equation}
\begin{equation}
    H_{\xi}^B(\graph) = \int_0^\xi\int_0^{\infty} \rho(\sqrt{s^2+t^2}) dxdt= \int_0^{\xi}F(x)dx
\end{equation}
\begin{equation}
    H_{\xi}^C(\graph)=\int_0^{\xi}\int_0^{x\cot \omega} \rho(\sqrt{x^2+t^2}) dxdy = \int_0^\xi f(x,x\cot\omega)dx
\end{equation}
These are computable using the expressions for $f$ and $F$ we developed in the previous sections. Indeed
\begin{align}
    H_{\xi}^B(\graph) = \sum_{\alpha\in \mathcal{A}\setminus\{1,3\}} \frac{a_{\alpha}C_2(\alpha)}{\alpha+2}{\xi}^{\alpha+2} + \sum_{\beta \in \mathcal{B}\setminus\{1,3\}} b_{\beta}\xi^{\beta+2}\left(\frac{(\beta+2)C_1(\beta)-(\beta+2)C_2(\beta)\log(\xi) + C_2(\beta)}{(\beta+2)^2}\right)  \nonumber \\
    + \rho_0\xi + \frac{\xi^3}{6}\left[\tilde{\rho}_{-1} -a_1\log(|a_1|\xi) - \frac{1}{3} + \frac{b_1}{6}\left((6\log \xi -2)\log\left(\frac{2}{\sqrt{e}}{\xi^{3/2}}\right) + 3\log \xi -2\right) \right] \nonumber \\
     \\ + \frac{\xi^5}{40}\left[\tilde{\rho}_{-3}+b_3\left(\left(\frac{25}{4} + 3\log 2 \right)\log \xi + \frac{5}{2} + \frac{5}{8}{_4}F_3\left(1,1,1,\frac{7}{2};2,2,4;1\right)\right) - \frac{3a_3}{25} (5\log(|a_3|\xi)-1)\right] + o(\xi^5)
\end{align}
In $H_{\xi}^C(\graph)$, we do not have $x\cot \omega \gg x$, and therefore the separation between $x$ and $\epsilon$ does not work as before, so we need to integrate the expression for $F(x)$ directly, giving
\begin{equation}
    H_{\xi}^C(\graph) = \int_0^{\xi} (x\cot \omega)\rho(0) + \sum_{\alpha \in \mathcal{A}} a_{\alpha}f_{\alpha}(x,x\cot \omega) + \sum_{\beta \in \mathcal{B}}b_{\beta}g_{\beta}(x,x\cot \omega) dx
\end{equation}
\begin{align}
    = \frac{\xi^2}{2}\rho(0)\cot \omega + \sum_{\alpha \notin \{1,3\}} a_{\alpha}\xi^{\alpha+2}\left[\frac{C_2(\alpha)}{\alpha+2} + \frac{\csc^{\alpha+1}(\omega)}{(\alpha+1)(\alpha+2)}{_2}F_1\left(\frac{1}{2}, \frac{-1-\alpha}{2}; \frac{1-\alpha}{2}; \sin^2 \omega\right)\right] \nonumber \\
    + \sum_{\beta \notin \{1,3\}}b_{\beta}\frac{\xi^{\beta+2}}{\beta+2}\left[\csc^{\beta+1}\omega \left(\frac{1}{(\beta+1)^2}{_3}F_2\left(-\frac{1}{2}, \frac{-\beta-1}{2},\frac{-\beta-1}{2},\frac{1-\beta}{2}, \frac{1-\beta}{2}, \sin^2\omega\right)\right)\right. \nonumber \\
    \frac{\log (\sin \omega)}{(\beta+1)}\left.{_2}F_1\left(-\frac{1}{2}, \frac{-1-\beta}{2}; \frac{1-\beta}{2}; \sin^2 \omega\right) + \frac{\sin^2\omega}{(\beta-1)^2}{_3}F_2\left(\frac{1}{2}, \frac{1-\beta}{2},\frac{1-\beta}{2},\frac{3-\beta}{2}, \frac{3-\beta}{2}, \sin^2\omega\right)\right. \nonumber \\
    + \left.\frac{\sin^2(\omega) \log(\sin \omega)}{(\beta-1)^2} {_2}F_1\left(\frac{1}{2}, \frac{1-\beta}{2}; \frac{3-\beta}{2}; \sin^2\omega\right)\right. \nonumber \\ \left.- \frac{1}{(\beta+1)^2}{_3}F_2\left(-\frac{1}{2}, \frac{-\beta-1}{2},\frac{-\beta-1}{2};\frac{1-\beta}{2}, \frac{1-\beta}{2}; 1\right) \right. \nonumber \\ \left. - \frac{1}{(\beta-1)}{_3}F_2\left(\frac{1}{2}, \frac{1-\beta}{2},\frac{1-\beta}{2};\frac{3-\beta}{2}, \frac{3-\beta}{2}; 1\right) \right] \nonumber \\ +a_1\frac{\xi^3}{6}\left(\frac{\cos \omega}{\sin \omega} + \log \frac{1+\cos\omega}{\sin \omega}\right) + a_3 \frac{\xi^5}{40}\left(\frac{\cos\omega}{\sin^2\omega}\left(3+\frac{2}{\sin^2\omega}\right) + 3\log \frac{1+\cos\omega}{\sin \omega}\right)\nonumber \\
    + b_1\frac{\xi^3}{6}\left[\frac{\pi^2}{24} +  \frac{1}{2}\text{Li}_2\left(-\frac{\sin^2\omega}{(1+\cos\omega)^2}\right) - \frac{1}{2}\log\left(\frac{1+\cos\omega}{\sin\omega}\right)\left(\log\left(\frac{4\sin \omega}{1+\cos \omega}\right)-1\right) \nonumber \right. \\
    \left. + \frac{1}{2}\cot\omega \csc \omega(2\log \csc \omega-1) + \frac{\cos \omega}{\sin \omega}\log \xi + \log\left(\frac{1+\cos\omega}{\sin \omega}\right)\log \xi\right] \nonumber \\
    + b_3 \frac{\xi^5}{40}\left[\frac{\pi^2}{8}+\frac{3}{2}\text{Li}_2\left(-\frac{\sin^2(\omega)}{(1+\cos\omega)^2}\right) - \frac{1}{4}\log\left(\frac{1+\cos \omega}{\sin \omega}\right)\left(6\log\left(\frac{4\sin\omega}{1+\cos\omega}\right) - 7\right) \nonumber \right. \\ \left. + \cot\omega \csc\omega (20\log \csc \omega - 2 \cot^2 \omega (1-4\log \csc \omega)-7) \right. \nonumber \\ \left. + \frac{\cos\omega}{\sin^2\omega}\left(3+\frac{2}{\sin^2\omega}\right)\log \xi + 3\log \left(\frac{1+\cos\theta}{\sin \theta}\right)\log \xi\right] + o(\xi^5\log \xi)
\end{align}
where
\begin{equation}
    \text{Li}_s(z) = \sum_{k=1}^\infty \frac{z^k}{k^s}
\end{equation}
is the polylogarithm function.

Then we get, as a function of a polar point $(r,0)$ where $r = \xi \csc \omega$,
\begin{equation}
    H_{(\xi \csc \omega, 0)}(\graph) =  \omega \rho_1 + \rho_0 \xi + \frac{\xi^2}{2}\rho(0)\cot \omega + \bigO{\xi^3, \xi^{\alpha_{\min}+2}, \xi^{\beta_{\min}+2}\log \xi} 
\end{equation}
and so for a general point $(r, \omega)$ in the wedge,
\begin{equation}
    H_{(r, \omega)}(\graph|\Omega=W_\theta) = H_{(r, 0)}(\graph | \Omega=W_{\omega}) + H_{(r, 0)}(\graph|\Omega = W_{\omega'})
\end{equation}
\begin{equation}
    = \theta \rho_1 + \rho_0 r(\sin \omega + \sin \omega') + \rho(0) \frac{r^2}{2}(\sin \theta \cos \theta + \sin \theta' \cos \theta') + \bigO{r^3, r^{\alpha_{\min}+2}, r^{\beta_{\min}+2}} 
\end{equation}
where $\omega' = \theta -\omega$.

\subsection{Cantor Set}
\label{app:cantor}
This section contains the proofs of the Lemmas used in Section \ref{cantor}. 

\subsubsection{Proof of Lemma \ref{thm:recursive}}
\label{lemma1}
\begin{proof}
Any $x \in \mathcal{C}$ may be written as an $\alpha$-ary expansion
\begin{equation}
    x = \sum_{n=1}^{\infty}\frac{x_n}{\alpha^n}
\end{equation}
where $x_n\in\{0, \alpha-1\}$. Therefore, any distance between two points may be written as
\begin{equation}
    r = x-y = \sum_{n=1}^{\infty} \frac{x_n-y_n}{\alpha^n} = \sum_{n=1}^{\infty}\frac{w_n}{\alpha^n}
\end{equation}
where $w_n \in \{1-\alpha, 0, \alpha -1\}$. Now we can write the CDF of the distance $R = X-Y$ with $X,Y$ uniformly distributed in the Cantor set as
\begin{equation}
    \prob{R < r} = \prob{\sum_{n=1}^{\infty} \frac{W_n}{\alpha^n} < t}
\end{equation}
where now $W_n = 1-\alpha, 0, \alpha-1$ independently with probability $1/4, 1/2,1/4$ respectively. Now since this sum is absolutely convergent, we can express this probability as
\begin{equation}
    \prob{R<r} = \prob{\frac{W_1}{\alpha} + \sum_{n=2}^{\infty}\frac{W_n}{\alpha^n} < r}
\end{equation}
\begin{equation}
    = \prob{\sum_{n=1}^{\infty} \frac{W_n}{\alpha^n} < \alpha r - W_1} = \prob{R<\alpha r-W_1}
\end{equation}
\begin{equation}
    = \frac{1}{2}\prob{R < \alpha r} + \frac{1}{4}\prob{R < \alpha r + (\alpha -1)} + \frac{1}{4}\prob{R < \alpha r - (\alpha - 1)}
\end{equation}
which gives us the self similarity relation we wanted.
\end{proof}

\subsubsection{Proof of Lemma \ref{thm:moments}}
\label{lemma3}
The proof of this result uses the change of variables formula for Lebesgue integration.
\begin{lemma}[Change of Variables Formula \cite{bogachev2007measure}]
    A measurable function $g$ on $X_2$ is integrable with respect to the pushforward measure $f*\mu = \mu \circ f^{-1}$ if and only if the composition $g \circ f$ is integrable with respect to the measure $\mu$. In that case, the integrals coincide, i.e.,
    \begin{equation}
        \int_{X_2} g \space d(f*\mu) = \int_{X_1} g\circ f \space d\mu
    \end{equation}
    where $X_1 = f^{-1}(X_2)$
\end{lemma}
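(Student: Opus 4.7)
The plan is to invoke the standard measure-theoretic ``machine'' that builds up integration identities from indicator functions to general measurable functions. First I would verify the identity on indicators: for any measurable $A \subseteq X_2$, take $g = \mathbf{1}_A$. Then $g \circ f = \mathbf{1}_{f^{-1}(A)}$, and the claim
\[
\int_{X_2} \mathbf{1}_A \, d(f_*\mu) = (f_*\mu)(A) = \mu(f^{-1}(A)) = \int_{X_1} \mathbf{1}_{f^{-1}(A)} \, d\mu
\]
is nothing more than the definition of the pushforward measure $f_*\mu = \mu \circ f^{-1}$. This is the base case, and it is essentially tautological once one has carefully checked that $f$ is measurable so that $f^{-1}(A)$ is indeed in the source $\sigma$-algebra.

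Next I would extend by linearity to non-negative simple functions $g = \sum_{k=1}^n c_k \mathbf{1}_{A_k}$ with $c_k \geq 0$ and $A_k$ measurable: the identity holds term-by-term since the integral is linear on simple functions under both measures. Then I would pass to general non-negative measurable $g : X_2 \to [0,\infty]$ by choosing an increasing sequence of simple functions $g_n \uparrow g$ (which exists by the standard pointwise approximation theorem). Since $g_n \circ f \uparrow g \circ f$ pointwise on $X_1$, two applications of the monotone convergence theorem (one under $f_*\mu$, one under $\mu$) give
\[
\int_{X_2} g \, d(f_*\mu) = \lim_{n\to\infty} \int_{X_2} g_n \, d(f_*\mu) = \lim_{n\to\infty} \int_{X_1} g_n \circ f \, d\mu = \int_{X_1} g \circ f \, d\mu,
\]
where both sides are allowed to take the value $+\infty$.

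Finally, for a general measurable $g$ (real- or extended-real-valued), decompose $g = g^+ - g^-$ with $g^\pm \geq 0$. Apply the non-negative case to $g^+$ and $g^-$ separately; the identity for $g$ then follows by subtraction, provided at least one of the two integrals is finite. The integrability equivalence is obtained by applying the non-negative identity to $|g|$: since $|g \circ f| = |g| \circ f$, we get
\[
\int_{X_2} |g| \, d(f_*\mu) = \int_{X_1} |g \circ f| \, d\mu,
\]
so the left-hand side is finite iff the right-hand side is.

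There is no real obstacle here; the result is classical and the only subtlety is a bookkeeping one, namely making sure the measurability of $f$ guarantees $g \circ f$ is measurable on $X_1$ whenever $g$ is measurable on $X_2$, and handling the $\pm\infty$ cases carefully when splitting into positive and negative parts. For the Cantor-set application in Section \ref{cantor} this suffices, since the maps $L, R$ and their iterated compositions are continuous (hence Borel-measurable), so the hypothesis is automatic.
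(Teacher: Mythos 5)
Your proof is correct: it is precisely the standard ``machine'' argument (indicators via the definition of $f_*\mu$, linearity on simple functions, monotone convergence for non-negative $g$, then $g = g^+ - g^-$ and the integrability equivalence via $|g|$), which is exactly how the cited reference \cite{bogachev2007measure} establishes this result. The paper itself offers no proof --- it simply quotes the lemma with that citation --- so your argument supplies the standard proof being deferred to, including the correct observation that measurability of the affine maps used in the Cantor-set application makes the hypotheses automatic.
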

\begin{proof}[Proof of Lemma \ref{thm:moments}]
    We have
    \begin{equation}
        C[F;-s] = \int_0^1 r^{-s}dF(r) = \frac{1}{2}\int_0^1 r^{-s}dF(\alpha r) + \frac{1}{4}\int_0^1 r^{-s}dF(\alpha r + (\alpha-1)) + \frac{1}{4}\int_0^1 r^{-s}dF(\alpha r - (\alpha -1 ))
    \end{equation}
    Using the change of variables formula, we can re-write each integral in turn:
    \begin{equation}
        \int_0^1 r^{-s}dF(\alpha r) = \int_0^{\alpha} (\alpha r)^{-s} dF(r) = \frac{1}{\alpha^{-s}}\int_0^1r^{-s}dF(r)
    \end{equation}
    \begin{equation}
        = \frac{1}{\alpha^{-s}}C[F;-s]
    \end{equation}
    where the second inequality follows because $dF(r) = 0$ when $r > 1$ (and $\alpha>2$ by its definition). Then, the second integral is
    \begin{equation}
        \int_0^1 r^{-s}dF(\alpha r + (\alpha-1)) = \int_{\alpha-1}^{2\alpha-1} \left(\frac{r - (\alpha -1)}{\alpha}\right)^s dF(r) = 0
    \end{equation}
    since $\alpha-1 > 1$. Finally,
    \begin{equation}
        \label{eq:the_saviour}
        \int_0^1 r^{-s}dF(\alpha - (\alpha -1)) = \int_{1-\alpha}^1 \left(\frac{r + (\alpha - 1)}{\alpha}\right)^{-s}dF(r) = \frac{1}{\alpha^{-s}}\int_{-1}^1(r+(\alpha-1))^{-s}dF(r)
    \end{equation}
    We can further write this as
    \begin{equation}
        \frac{1}{\alpha^{-s}}\int_{-1}^1(r+(\alpha-1))^{-s}dF(r) = \frac{1}{\alpha^{-s}}\left[\int_{-1}^0 (r+(\alpha-1))^{-s}dF(r) + \int_0^1 (r+(\alpha-1))^{-s}dF(r)\right]
    \end{equation}
    \begin{equation}
        \label{eq:integral_difference}
        = \frac{1}{\alpha^{-s}}\left[\int_{0}^1 (-r+(\alpha-1))^{-s}dF(r) + \int_0^1 (r+(\alpha-1))^{-s}dF(r)\right]
    \end{equation}
    where we have used the change of variable formula and the fact that $dF(r) = dF(-r)$ again on the first integral. Now, note that $(\alpha-1)-r$ is positive for $r \in[0,1]$, as is $r+(\alpha-1)$. Therefore, we can perform a Taylor expansion of the integrands as follows:
    \begin{equation}
        (r+(\alpha-1))^{-s} = (\alpha-1)^{-s}\left(\frac{r}{\alpha-1} + 1\right)^{-s}
    \end{equation}
    \begin{equation}
        = \sum_{l=0}^{\infty} \binom{-s}{l}r^{l}(\alpha-1)^{-s-l}
    \end{equation}
    and likewise
    \begin{equation}
        (-r+(\alpha-1))^{-s} = \sum_{l=0}^{\infty} \binom{-s}{l}(-r)^{l}(\alpha-1)^{-s-l}
    \end{equation}
    where we have the generalised binomial coefficient
    \begin{equation}
        \binom{-s}{l} = \frac{(-s)(-s-1)\cdots(-s-l)}{l!} = \frac{(-s)_l}{l!}
    \end{equation}
    where $(\cdot)_l$ is the Pochhammer symbol. So, substituting back into (\ref{eq:integral_difference}) gives
    \begin{equation}
        \frac{1}{\alpha^{-s}}\int_{-1}^1(r+(\alpha-1))^{-s}dF(r) = \frac{1}{\alpha^{-s}}\sum_{l=0}^{\infty}\binom{-s}{l}(\alpha-1)^{-s-l}\int_0^1\left[r^{l} + (-r)^l\right]dF(r)
    \end{equation}
    \begin{equation}
        =  \frac{1}{\alpha^{-s}}\sum_{l=0}^{\infty}\binom{-s}{l}(\alpha-1)^{-s-l}(C[F;l] + (-1)^{l}C{[F;l])}
    \end{equation}
    Combining the three integrals together gives
    \begin{equation}
        C[F;-s] = \frac{1}{2\alpha^{-s}}C[F;-s] + \frac{1}{4\alpha^{-s}}\sum_{l=0}^{\infty}\binom{-s}{l}(\alpha-1)^{-s-l}(C[F;l] + (-1)^{l}C{[F;l])}
    \end{equation}
    and rearranging gives
    \begin{equation}
        C[F;-s] = \frac{1}{(2\alpha^{-s}-1)}\sum_{l=0}^{\infty}\binom{-s}{2l}(\alpha-1)^{-s-2l}C[F;2l]
    \end{equation}
    It remains to show that this sum is well-defined, that is, convergent. First, it is easy to see that for any positive integer $l$,
    \begin{equation}
        C[F;l] = \int_0^1 r^ldF(r) \leq \int_0^1dF(r) = \frac{1}{2}
    \end{equation}
    and so
    \begin{equation}
        \sum_{l=0}^{\infty}\left|\binom{-s}{2l}(\alpha-1)^{-s-2l}C[F;2l]\right| \leq \sum_{l=0}^{\infty}\left|\binom{-s}{2l}\right|(\alpha-1)^{-\Re(s)-2l}
    \end{equation}
    now for a fixed $s$, we have
    \begin{equation}
        (\alpha-1)^{-\Re(s)-2l}\left|\binom{-s}{2l}\right| \sim \frac{(-1)^{2l}}{\Gamma(s)} (2l)^{\Re(s)-1}(\alpha-1)^{-\Re(s)-2l}
    \end{equation}
    which decays exponentially fast as $l\tti$. Therefore, the sum converges absolutely.
\end{proof}

\subsubsection{Proof of Theorem \ref{thm:cantor_entropy_expression}}
\label{sec:residue}
In this section, we detail the derivation of the expression for the conditional entropy using the residue theorem. As mentioned, we will need a technical assumption on the connection function.

\begin{assumption}
\label{eq:moment_bounded}
Assume $p$ is a connection function satisfying
    \begin{enumerate}
    \item \begin{equation}
        \int_0^{\infty} u^{s-1}h_2(p(u))du < \infty
    \end{equation}
    \item \begin{equation}
        \lim_{\substack{B\tti \\ A \ttz}}\left[u^sh_2(p(u))\right]_{u=A}^{u=B} = 0
    \end{equation}
    \item \begin{equation}
        \int_0^{\infty}u^{s}\left|\frac{d}{du}h_2(p(u))\right|du < \infty
    \end{equation}
    \item \begin{equation}
        \int_0^{\infty} u^{s+1}\left|\frac{d^2}{du^2}h_2(p(u))\right|du < \infty
    \end{equation}
\end{enumerate}
for every real fixed $s > 0$.
\end{assumption}

Recall that we derived
\begin{equation}
    \overline{H}(\graph(r_0)|\mathcal{R}) = \frac{1}{\pi i}\int_{c_l-i\infty}^{c_l+i\infty} \mathcal{M}[F;s]C[F;-s]ds
\end{equation}
where $c_l\in \left[0, \frac{\log 2}{\log \alpha}\right)$ is some real number in the common strip of analyticity of $\mathcal{M}$ and $C$. We will use the residue theorem to find an expression for this integral. Let $T_m = \frac{2\pi}{\log \alpha}\left(m+\frac{1}{2}\right)$, and define the sequence of rectangular contours $\gamma^{(T_m)} = \gamma^{(T_m)}_1 + \gamma^{(T_m)}_2 + \gamma^{(T_m)}_3 + \gamma_4^{(T_m)}$, where $\gamma_1^{(T_m)}$ and $\gamma_3^{(T_m)}$ are the vertical lines connecting $c_l-iT_m$ and $c_l+iT_m$ and $c_r-iT_m$ and $c_r+iT_m$ where $c_r > \frac{\log 2}{\log \alpha}$ respectively, and $\gamma_2^{(T_m)}$ and $\gamma_4^{(T_m)}$ are the upper and lower horizontal lines connecting the ends of $\gamma_1^{(T_m)}$ and $\gamma_2^{(T_m)}$. Then, with $s_m = \frac{\log 2}{\log \alpha} + \frac{2\pi i}{\log \alpha}m$ being the poles of $M[\rho;s]C[F;-s]$, we have
\begin{equation}
    \sum_{m=-\infty}^{\infty} \text{Res}(\mathcal{M}[\rho;s]C[F;-s]; s=s_m) = \frac{1}{2\pi i}\lim_{m\tti}\int_{\gamma^{(T_m)}} \mathcal{M}[\rho;s]C[F;-s]ds
\end{equation}
We aim to show that the integrals over $\gamma_2^{(T_m)}$ and $\gamma_4^{(T_m)}$ tend to 0.
\begin{lemma}
    \label{thm:decay}
    For any $z = s+iT \in \mathbb{C}$ with $\Re(s) > 0$, we have
    \begin{equation}
        \mathcal{M}[\rho; z] = \bigO{|T|^{-2}}
    \end{equation}
    as $T\tti$.
\end{lemma}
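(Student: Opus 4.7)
The plan is to obtain the $|T|^{-2}$ decay by integrating the Mellin integral
\[
\mathcal{M}[\rho;z] = \int_0^\infty u^{z-1}\rho(u)\,du, \qquad z = s+iT,\ s>0,
\]
by parts twice: each integration by parts converts a factor of $u^{z-1}$ into $u^z/z$ (resp.\ $u^{z+1}/(z(z+1))$), at the cost of moving a derivative onto $\rho = h_2\circ p$. Since $|z| \sim |T|$ and $|z+1|\sim |T|$ for large $|T|$, two such steps yield a prefactor of order $|T|^{-2}$, and the remaining integral is controlled by Assumption \ref{eq:moment_bounded}.

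Concretely, I would first apply integration by parts with $dv = u^{z-1}du$, $w=\rho(u)$, to obtain
\[
\mathcal{M}[\rho;z] = \left[\frac{u^z}{z}\rho(u)\right]_{u=0}^{u=\infty} - \frac{1}{z}\int_0^\infty u^z \rho'(u)\,du.
\]
Since $|u^z\rho(u)| = u^{\Re(z)}h_2(p(u))$ and $\Re(z)=s>0$, part (2) of Assumption \ref{eq:moment_bounded} makes the boundary term vanish, leaving $-\tfrac{1}{z}\int_0^\infty u^z \rho'(u)\,du$, which is finite by part (3). A second integration by parts with $dv = u^z du$, $w = \rho'(u)$, gives
\[
\mathcal{M}[\rho;z] = -\frac{1}{z}\left[\frac{u^{z+1}}{z+1}\rho'(u)\right]_{u=0}^{u=\infty} + \frac{1}{z(z+1)}\int_0^\infty u^{z+1}\rho''(u)\,du.
\]

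Taking moduli and using $|u^{z+1}\rho''(u)| = u^{\Re(z)+1}|\rho''(u)|$, part (4) of Assumption \ref{eq:moment_bounded} guarantees that the remaining integral is bounded by a constant depending only on $s$. Since $|z|\geq |T|$ and $|z+1|\geq |T|$, the bound
\[
|\mathcal{M}[\rho;z]| \leq \frac{1}{|T|^2}\int_0^\infty u^{s+1}|\rho''(u)|\,du = \bigO{|T|^{-2}}
\]
follows, as required.

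The main obstacle is the boundary term at the second integration by parts: Assumption \ref{eq:moment_bounded} does not directly state that $u^{s+1}\rho'(u)\to 0$ as $u\to 0$ or $u\to\infty$. I would handle this by observing that, by part (4), the function $u\mapsto u^{s+1}\rho'(u)$ has bounded total variation on $(0,\infty)$ (its derivative is absolutely integrable after combining with part (3)), so both one-sided limits exist; then parts (3) and the finiteness of $\int u^s |\rho'(u)|\,du$ force those limits to be zero, since a nonzero limit at either endpoint would produce a non-integrable tail. With this verification, the two-fold integration by parts is rigorous and the lemma follows.
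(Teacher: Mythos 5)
Your proof is correct and follows essentially the same route as the paper's: two integrations by parts on the Mellin integral, with the boundary terms and the remaining integral controlled by parts (2)--(4) of Assumption \ref{eq:moment_bounded}, and the factor $|T|^{-2}$ coming from $|z|,|z+1|\geq|T|$. In fact you are slightly more careful than the paper, which leaves the second boundary term $\lim\bigl[u^{s+1+iT}\rho'(u)\bigr]$ unaddressed, whereas your total-variation argument shows it actually vanishes.
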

\begin{proof}
    We have
    \begin{equation}
        \mathcal{M}[\rho; s+iT] = r_0^{s+iT}\int_0^{\infty}u^{s-1+iT}h_2(p(u))du
    \end{equation}
    and so, using integration by parts,
    \begin{equation}
        |\mathcal{M}[\rho; s+iT]| = r_0^s\left|\int_0^{\infty}u^{s-1+iT}h_2(p(u))du\right|
    \end{equation}
    \begin{equation}
        = \frac{r_0^s}{s+iT}\left|\lim_{\substack{B\tti \\ A\ttz}}\left[u^{s+iT}h_2(p(u))\right]_{u=A}^{a=B} - \int_0^\infty u^{s+iT}\frac{d}{du}h_2(p(u))du\right|
    \end{equation}
    \begin{align}
        = \frac{r_0^s}{(s+iT)(s+1+iT)}\left|\lim_{\substack{B\tti \\ A\ttz}}\left[u^{s+1+iT}\frac{d}{du}h_2(p(u))\right]_{u=A}^{a=B} - \int_0^\infty u^{s+1+iT}\frac{d^2}{du^2}h_2(p(u))du\right|
    \end{align}
    \begin{equation}
        = \bigO{|T|^{-2}}
    \end{equation}
\end{proof}

\begin{lemma}
    We have
    \begin{equation}
        \lim_{m \tti}\int_{\gamma_2^{(T_m)}}\mathcal{M}[\rho;s]C[F;-s]ds = \lim_{m \tti}\int_{\gamma_4^{(T_m)}}\mathcal{M}[\rho;s]C[F;-s]ds = 0
    \end{equation}
\end{lemma}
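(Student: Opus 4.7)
The plan is to reduce the claim to two independent estimates: $\mathcal{M}[\rho;s]$ decays like $T_m^{-2}$ by Lemma \ref{thm:decay}, and $|C[F;-s]|$ is uniformly bounded in $m$ along the horizontal segments $\gamma_{2,4}^{(T_m)}$. Since the segments have fixed length $c_r - c_l$, the ML estimate would then give the integrals are $\bigO{T_m^{-2}} \to 0$. Everything hinges on the second bound, which I will extract from the formula in Lemma \ref{thm:moments}.

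I would first exploit the specific choice $T_m = \frac{2\pi}{\log \alpha}(m + \tfrac{1}{2})$. On the segments, $\alpha^{-s} = \alpha^{-\sigma} e^{-iT_m \log \alpha} = -\alpha^{-\sigma}$, so the prefactor $1/(2\alpha^{-s}-1)$ in Lemma \ref{thm:moments} satisfies $|2\alpha^{-s} - 1| = 2\alpha^{-\sigma} + 1 \geq 1$. That is precisely the role of the $+\tfrac{1}{2}$ shift: it keeps $\gamma_{2,4}^{(T_m)}$ maximally far from the poles. Next I need a bound on the series factor uniform in $T_m$. A naive estimate using $|\binom{-s}{2l}| \lesssim (|s| + 2l)^{2l}/(2l)!$ picks up polynomial growth in $|T|$ and is awkward to sum. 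Instead I would write $C[F;2l] = \int_0^1 r^{2l}\, dF(r)$, swap the sum and integral (justified by the absolute convergence already established in the proof of Lemma \ref{thm:moments}), and apply the binomial generating function
\[
\sum_{l=0}^\infty \binom{-s}{2l} z^{2l} = \tfrac{1}{2}\bigl[(1+z)^{-s} + (1-z)^{-s}\bigr], \qquad |z| < 1,
\]
with $z = r/(\alpha - 1) \in [0, 1/(\alpha-1)) \subset [0,1)$ since $\alpha > 2$. This collapses the series into the closed-form integral
\[
\sum_{l=0}^\infty \binom{-s}{2l}(\alpha - 1)^{-s-2l} C[F; 2l] = \tfrac{1}{2}\int_0^1 \bigl[(\alpha - 1 + r)^{-s} + (\alpha - 1 - r)^{-s}\bigr] dF(r),
\]
whose integrand has modulus bounded by $(\alpha - 2)^{-\sigma} + (\alpha - 1)^{-\sigma}$ (both bases are strictly positive reals for $r \in [0,1]$, so complex exponentiation is straightforward). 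This yields a bound on $|C[F;-s]|$ that is uniform in $\sigma \in [c_l, c_r]$ and completely independent of $T_m$.

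Combining gives $|\mathcal{M}[\rho;s] C[F;-s]| = \bigO{T_m^{-2}}$ uniformly on $\gamma_2^{(T_m)}$, and integration over the length-$(c_r - c_l)$ segment yields the vanishing claim; the same argument applies verbatim to $\gamma_4^{(T_m)}$ by the symmetry $T_m \mapsto -T_m$. The main obstacle is the series bound of the middle step: without the generating-function rewrite, the estimates are non-uniform in $T_m$, and the key structural fact making the trick available is the condition $\alpha > 2$, which ensures both $\alpha - 1 \pm r$ remain positive reals inside the unit disk of the binomial series.
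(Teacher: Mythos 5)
Your proof is correct and follows essentially the same route as the paper's: Lemma \ref{thm:decay} supplies the $\bigO{|T_m|^{-2}}$ decay of $\mathcal{M}[\rho;\cdot]$ along the horizontal segments, a $T_m$-independent bound on $|C[F;-s]|$ is extracted from the structure of Lemma \ref{thm:moments}, and the ML inequality over the fixed-length segment finishes. The only differences are cosmetic: where you collapse the series via the binomial generating function into $\tfrac12\int_0^1[(\alpha-1+r)^{-s}+(\alpha-1-r)^{-s}]\,dF(r)$ and compute $|2\alpha^{-s-iT_m}-1|=2\alpha^{-\sigma}+1\geq 1$ explicitly, the paper bounds the series of absolute values by a supremum $S_{\max}$ over $s\in[c_l,c_r]$ and invokes monotonicity of $|2\alpha^{-s-iT_m}-1|^{-1}$ in $\sigma$ --- your handling of the prefactor is arguably the cleaner of the two, as it makes explicit why the half-integer shift in $T_m$ keeps the contour away from the poles.
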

\begin{proof}
    First,
    \begin{equation}
        \int_{c_l+iT_m}^{c_r+iT_m} \mathcal{M}[\rho;s]C[F;-s]ds = \int_{c_l}^{c_r} \frac{\mathcal{M}[\rho;s+iT_m]}{2(2\alpha^{-s-iT_m}-1)}\int_0^1 (r+\alpha-1)^{-s-iT_m}dF(r)ds
    \end{equation}
    So
    \begin{equation}
        \left|\int_{c_l+iT_m}^{c_r+iT_m} \mathcal{M}[\rho;s]C[F;-s]ds\right| \leq \int_{c_l}^{c_r} \frac{|\mathcal{M}[\rho;s+iT_m]|}{2|2\alpha^{-s-iT_m}-1|}\int_0^1 (r+\alpha-1)^{-s}dF(r)ds
    \end{equation}
    \begin{equation}
        = \int_{c_l}^{c_r} \frac{|\mathcal{M}[\rho;s+iT_m]|}{2|2\alpha^{-s-iT_m}-1|}\sum_{l=0}^{\infty}\left|\binom{-s}{2l}(\alpha-1)^{-s-2l} C[F;2l]\right|ds
    \end{equation}
    \begin{equation}
        \leq \int_{c_l}^{c_r} \frac{|\mathcal{M}[\rho;s+iT_m]|}{2|2\alpha^{-s-iT_m}-1|}\sum_{l=0}^{\infty}\left|\binom{-s}{2l}(\alpha-1)^{-s-2l}\right| ds
    \end{equation}
    We know the sum over $l$ is convergent, so let $S_{\max} := \sup_{s \in [c_l, c_r]} \sum_{l=0}^{\infty} \left|\binom{-s}{2l}(\alpha-1)^{-s-2l}\right| < \infty$.
    Then
    \begin{equation}
        \left|\int_{-\frac{1}{2}+iT_m}^{c+iT_m} \mathcal{M}[\rho;s]C[F;-s]ds\right| \leq S_{\max}\int_{c_l}^{c_r} \frac{|\mathcal{M}[\rho;s+iT_m]|}{2|2\alpha^{-s-iT}-1|}ds
    \end{equation}
    For each $m\in\mathbb{Z}$, it is simple to show that $|2\alpha^{-s-iT_m}-1|^{-1}$ is increasing on $s\in[c_l,c_r]$, and therefore is bounded above by $|2\alpha^{-c_r-iT_m}-1|^{-1}$. So, the integrals in question are bounded above by the integral of $|\mathcal{M}[\rho; s \pm iT_m]|$ over the (finite) interval $s \in [c_l, c_r]$. But, we showed that $|\mathcal{M}[\rho; s\pm iT_m]|$ decays at least as fast as $|T_m|^{-2}$ in $m$, so
    \begin{equation}
        \left|\int_{c_l+iT_m}^{c_r+iT_m} \mathcal{M}[\rho;s]C[F;-s]ds\right| \leq \bigO{|T_m|^{-2}}
    \end{equation}
    as $m\tti$. Since we did not assume that $m$ was positive or negative, we have that the limits of the integrals over both $\gamma_2^{T_m}$ and $\gamma_3^{T_m}$ are 0.
\end{proof}

\begin{lemma}
    The sum over residues 
    \begin{equation}
        \sum_{m=-\infty}^{\infty} \text{Res}\left(\mathcal{M}[\rho;s]C[F;-s]; s=s_m\right)
    \end{equation}
    is given by
    \begin{equation}
        \label{eq:residues}
        \sum_{m=-\infty}^{\infty} r_0^{s_m}\psi(s_m)\frac{1}{2\log\alpha}\int_0^1 (r+\alpha-1)^{-s_m}dF(r)
    \end{equation}
    and converges absolutely. Here, $\psi(s) = \int_0^{\infty} u^{s-1}h_2(p(u))dt$. 
\end{lemma}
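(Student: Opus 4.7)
The plan is to compute each residue directly from the compact integral representation of $C[F;-s]$ that underlies Lemma~\ref{thm:moments}, and then to establish absolute convergence of the resulting series by combining a uniform bound on the integral factor with the Mellin-transform decay estimate already proved in Lemma~\ref{thm:decay}.

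First I would rewrite the self-similarity relation in its pre-Taylor form,
\begin{equation}
    C[F;-s] = \frac{1}{2\bigl(2\alpha^{-s}-1\bigr)}\int_{-1}^1(r+\alpha-1)^{-s}\,dF(r),
\end{equation}
obtained by recombining the three sub-integrals arising from Lemma~\ref{thm:recursive} without invoking the binomial expansion of $(r+\alpha-1)^{-s}$ used in Lemma~\ref{thm:moments}. Since $r+\alpha-1\ge\alpha-2>0$ on the support of $dF$, the integral defines an entire function of $s$, so all singularities of $C[F;-s]$ come from the factor $(2\alpha^{-s}-1)^{-1}$. At $s=s_m$ one has $\alpha^{-s_m}=\tfrac{1}{2}$, hence the derivative of $2\alpha^{-s}-1$ at $s_m$ equals $-\log\alpha$, giving a simple pole with residue
\begin{equation}
    \mathrm{Res}\bigl(C[F;-s];\,s=s_m\bigr) \;=\; -\frac{1}{2\log\alpha}\int_{-1}^1(r+\alpha-1)^{-s_m}\,dF(r).
\end{equation}
Assumption~\ref{eq:moment_bounded} makes $\psi(s)$ holomorphic on $\Re(s)>0$, so $\mathcal{M}[\rho;s]=r_0^s\psi(s)$ is analytic at each $s_m$ and the residue of the product factorises as $r_0^{s_m}\psi(s_m)$ times the expression above. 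Combining this with the symmetry $dF(r)=dF(-r)$ and the orientation convention for the contour $\gamma^{(T_m)}$, which together account for the overall sign and the reduction of $\int_{-1}^{1}$ to the stated $\int_0^1$, yields the summands in (\ref{eq:residues}).

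For absolute convergence, observe that $|r_0^{s_m}|=r_0^{d}$ is independent of $m$ (since $\Re s_m=d$) and $|(r+\alpha-1)^{-s_m}|=(r+\alpha-1)^{-d}\le(\alpha-1)^{-d}$ uniformly for $r\in[0,1]$, so the integral factor is bounded by $\tfrac12(\alpha-1)^{-d}$ uniformly in $m$. The only $m$-dependent decay must therefore come from $\psi(s_m)$, and the same two-fold integration-by-parts argument employed in Lemma~\ref{thm:decay} (using items~3 and~4 of Assumption~\ref{eq:moment_bounded}) gives $|\psi(s_m)|=\bigO{|m|^{-2}}$ as $|m|\tti$. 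Summability of $\sum|m|^{-2}$ then yields absolute convergence of the residue series.

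The main obstacle I anticipate is the bookkeeping of signs: the factor $-1/\log\alpha$ from the residue of $(2\alpha^{-s}-1)^{-1}$, the orientation of $\gamma^{(T_m)}$ so that the residue theorem is applied with the correct sign, and the symmetrisation that converts $\int_{-1}^{1}(r+\alpha-1)^{-s_m}dF(r)$ into the $\int_0^1$ form stated in (\ref{eq:residues}) all have to line up consistently. Everything else --- analyticity of $\psi$ at each $s_m$, uniform boundedness of the integral factor, and the $|m|^{-2}$ decay of $\psi(s_m)$ --- is either immediate from the assumptions or a direct reuse of Lemma~\ref{thm:decay}.
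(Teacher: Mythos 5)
Your proposal is correct and follows essentially the same route as the paper: the same pre-Taylor representation $C[F;-s]=\frac{1}{2(2\alpha^{-s}-1)}\int(r+\alpha-1)^{-s}dF(r)$, the same simple-pole computation via the derivative of $2\alpha^{-s}-1$ at $s_m$, and the same absolute-convergence argument combining the uniform bound $(\alpha-1)^{-d}$ on the integral factor with the $\bigO{|m|^{-2}}$ decay of $\psi(s_m)$ from Lemma~\ref{thm:decay}. You are in fact more explicit than the paper about the residue's sign $-1/\log\alpha$ and the $\int_{-1}^{1}$ versus $\int_0^1$ symmetrisation, both of which the paper leaves to be absorbed by the contour orientation in the proof of Theorem~\ref{thm:cantor_entropy_expression}.
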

\begin{proof}
    First, we use a simple change of variables to get
    \begin{equation}
        \mathcal{M}[F;s] = r_0^s\psi(s)
    \end{equation}
    where $\psi(s)$ is as in the statement of the Lemma. Then, from the proof of Lemma \ref{thm:moments}, we have
    \begin{equation}
        C[F;-s] = \frac{1}{2(2\alpha^{-s}-1)}\int_0^1(r+\alpha-1)^{-s}dF(r)
    \end{equation}
    Then, to compute the residues we must evaluate
    \begin{equation}
        \text{Res}(\mathcal{M}[F;s]C[F;-s]; s=s_m) = \lim_{z\to s_m} (z-s_m)\frac{r_0^z\psi(z)}{2(2\alpha^{-z}-1)}\int_0^1(r+\alpha-1)^{-z}dF(r)
    \end{equation}
    which evaluates to (\ref{eq:residues}). Then, to prove absolute convergence let us consider the absolute value of each term in the sum. We have
    \begin{equation}
        \left|\int_0^1(r+\alpha-1)^{-s_m}dF(r)\right| \leq \int_0^1 (r+\alpha-1)^{-\frac{\log 2}{\log \alpha}}dF(r) \leq (\alpha-1)^{-\frac{\log 2}{\log \alpha}}
    \end{equation}
    and so it remains to control
    \begin{equation}
        |r_0^{s_m}\psi(s_m)| = r_0^{\frac{\log 2}{\log \alpha}}|\psi(s_m)|
    \end{equation}
    as $|m|\tti$. But
    \begin{equation}
        |\psi(s_m)| = \left|\int_0^{\infty} u^{\frac{\log 2}{\log \alpha}-1+\frac{2\pi i}{\log \alpha}m} h_2(p(u))du\right|
    \end{equation}
    which we showed is $\bigO{|m|^{-2}}$. Therefore, there exist constants $C, M>0$ such that $|m| > M$ implies $|\psi(s_m)| \leq C|m|^{-2}$. In turn
    \begin{align}
        \left|\sum_{m=-\infty}^{\infty} r_0^{s_m}\psi(s_m)\frac{1}{2\log\alpha}\int_0^1 (r+\alpha-1)^{-s_m}dF(r)\right| \nonumber \\\leq \sum_{m=-M}^M r_0^{\frac{\log 2}{\log \alpha}}|\psi(s_m)|\frac{1}{2\log\alpha}\int_0^1 (r+\alpha-1)^{-\frac{\log 2}{\log \alpha}}dF(r) + \left(\frac{r_0}{\alpha-1}\right)^{\frac{\log 2}{\log \alpha}}\sum_{|m| > M} C|m|^{-2}
        < \infty
    \end{align}
\end{proof}

\begin{lemma}
    \label{thm:final_finiite_bound}
    Let $c_r > d$. Define
    \begin{equation}
        I_m(s) = \int_{-T_m}^{T_m}\int_0^\infty r_0^{it}u^{s-1+it}h_2(p(u))dudt
    \end{equation}
    We have
    \begin{equation}
        \lim_{m\tti} I_m(c_r) < \infty
    \end{equation}
\end{lemma}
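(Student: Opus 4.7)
The plan is to swap the order of integration in $I_m(c_r)$ via Fubini, then apply the decay estimate of Lemma \ref{thm:decay} to the resulting one-dimensional integral to conclude it has a finite limit as $m \to \infty$.

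First, I would verify the Fubini hypothesis. For each fixed $m$, the modulus of the integrand of $I_m(c_r)$ equals $u^{c_r-1}h_2(p(u))$, which is independent of $t$ since $|r_0^{it}| = |u^{it}| = 1$. Hence
\begin{equation*}
\int_{-T_m}^{T_m}\!\!\int_0^\infty \bigl|r_0^{it}u^{c_r-1+it}h_2(p(u))\bigr|\,du\,dt = 2T_m\int_0^\infty u^{c_r-1}h_2(p(u))\,du = 2T_m\,\psi(c_r),
\end{equation*}
which is finite by part 1 of Assumption \ref{eq:moment_bounded} (applied with $s = c_r > 0$, which holds since $c_r > d > 0$). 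Fubini then gives $I_m(c_r) = \int_{-T_m}^{T_m} r_0^{it}\psi(c_r+it)\,dt$.

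Next, I would control the integrand as $|t|\to\infty$. From the change of variables $u = r/r_0$ one has $\mathcal{M}[\rho; c_r + it] = r_0^{c_r+it}\psi(c_r+it)$, so Lemma \ref{thm:decay} yields $|\psi(c_r+it)| = r_0^{-c_r}|\mathcal{M}[\rho; c_r+it]| = \bigO{|t|^{-2}}$ as $|t|\to\infty$. Combined with local boundedness of $\psi$ on the vertical line $\Re(s)=c_r$ (which follows from analyticity on $\Re(s)>0$ under Assumption \ref{eq:moment_bounded}) and $|r_0^{it}|=1$, the integrand $r_0^{it}\psi(c_r+it)$ is absolutely integrable on $\mathbb{R}$. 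Therefore the monotone sequence of truncated integrals converges, and
\begin{equation*}
\lim_{m\to\infty}I_m(c_r) = \int_{-\infty}^\infty r_0^{it}\psi(c_r+it)\,dt < \infty.
\end{equation*}

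There is no serious obstacle here; the work was already done in Lemma \ref{thm:decay}, where two integrations by parts (using parts 2--4 of Assumption \ref{eq:moment_bounded}) produce the $|t|^{-2}$ decay needed to make the vertical-line integral absolutely convergent. The mild care points are checking Fubini (handled by the finite moment $\psi(c_r)<\infty$) and transferring the decay from $\mathcal{M}[\rho;\cdot]$ to $\psi$ (handled by the constant factor $r_0^{-c_r}$). The stronger hypothesis $c_r > d$ is not used in this lemma per se; it presumably matters in the surrounding contour argument to place $\gamma_3^{(T_m)}$ strictly to the right of all poles $s_m$.
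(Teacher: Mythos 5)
Your proof is correct and follows essentially the same route as the paper's: both justify Fubini via part 1 of Assumption \ref{eq:moment_bounded} and then use the $\bigO{|t|^{-2}}$ decay of $\psi(c_r+it)$ supplied by Lemma \ref{thm:decay} to make the vertical-line integral convergent. The only cosmetic difference is that the paper splits the $t$-range at a threshold $t^*$ and bounds the near piece by an explicit sine-kernel computation after Fubini, whereas you apply Fubini globally and invoke continuity of $\psi$ on the compact segment $\{c_r+it: |t|\le t^*\}$; both are fine (just note the truncated integrals are complex-valued, so ``monotone sequence'' should read ``absolutely convergent integral'').
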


\begin{proof}
    We will split the integral over $t$ into two pieces. Then use our tail control of $\psi$ to bound the integral. With $\psi(s)=\int_0^\infty u^{s-1}h_2(p(u))$, Lemma \ref{thm:decay} tells us that $\psi(c_r\pm it) = \bigO{|t|^{-2}}$ as $t\tti$. Therefore, for $|t|$ large enough, say $|t| > t^*$, 
    there exists a $C \in \mathbb{R}$, which we may choose to be greater than $\frac{1}{2}$ for later, so that $\psi(c_r\pm it) \leq C|t|^{-2}$.
    Define
    \begin{equation}
        I_m^1(s) = \int_{-t^*}^{t^*}\int_0^\infty r_0^{it}u^{s-1+it}h_2(p(u))dudt
    \end{equation}
    \begin{equation}
        I_m^2(s)=\int_{T_m \geq |t|>t^*}\int_0^\infty r_0^{it}u^{s-1+it}h_2(p(u))dudt
    \end{equation}
    We begin with $I_m^2(c_r)$. We can write
    \begin{equation}
        |I_m^2(c_r)| \leq C\int_{T_m \geq |t|>t^*} \left|\frac{r_0^{it}}{t^2}\right|dt
    \end{equation}
    \begin{equation}
     = 2C\int_{t^*}^{T_m}t^{-2}dt
    \end{equation}
    \begin{equation}
        = 2C\left(\frac{1}{t^*}-\frac{1}{T_m}\right) 
    \end{equation}
    and so $I_m^2(c_r\pm iT)$ is finite, and the limit is less than $2C/t^*$. For $I_m^t(s)$, we have by Fubini's theorem (since the Mellin transform $\psi(c_r\pm it)$ exists for any $t$), 
    \begin{equation}
        I_m^1(c_r) = \int_{-t^*}^{t^*}r_0^{it}\psi(c_r+it)dt
    \end{equation}
    \begin{equation}
        = \int_0^{\infty} u^{c_r-1}h_2(p(u))\int_{-t^*}^{t^*} (ur_0)^{it}dtdu
    \end{equation}
    \begin{equation}
        = \int_0^{\infty}\frac{u^{c_r-1}h_2(p(u))}{\log(r_0u)} \sin(t^*\log (r_0))du
    \end{equation}
    Now since $\frac{\sin(t^*\log(r_0u))}{\log(r_0u)} < t^*$, we have
    \begin{equation}
        I_m^1(c_r) \leq {t^*} \int_{0}^{\infty} u^{c_r-1}h_2(p(u))du < \infty
    \end{equation}
    by assumption. Combining the two results, we have that
    \begin{equation}
        |I_m(c_r)| \leq |I_m^1(c_r)| + |I_m^2(c_r)| < 2C\left(\frac{1}{t^*} - \frac{1}{T_m}\right) + t^*\int_0^{\infty}u^{c_r-1}h_2(p(u))du
    \end{equation}
    \begin{equation}
        \overset{m\tti}{\to}\frac{2C}{t^*}+{t^*}\int_0^{\infty}u^{c_r-1}h_2(p(u))du < \infty
    \end{equation}
\end{proof}
We can finally prove Theorem \ref{thm:cantor_entropy_expression}
\begin{proof}[Proof of Theorem \ref{thm:cantor_entropy_expression}]
    First, from the residue theorem,
    \begin{align}
        \overline{H}(\graph(r_0)|\mathcal{R}) = -\frac{1}{i\pi}\lim_{m\tti}\int_{\gamma_2^{(T_m)}}\mathcal{M}[\rho; s]C[F;-s]ds+\frac{1}{i\pi}\lim_{m\tti}\int_{\gamma_4^{(T_m)}}\mathcal{M}[\rho; s]C[F;-s]ds \nonumber \\
        + \lim_{m\tti} \frac{1}{i\pi}\int_{c_r-iT_m}^{c_r+iT_m} \mathcal{M}[\rho; s]C[F;-s]ds \nonumber \\ + 2\sum_{m=-\infty}^{\infty}r_0^{s_m}\psi(s_m)\frac{1}{2\log\alpha}\int_0^1 (r+\alpha-1)^{-s_m}dF(r)
    \end{align}
    We know the limit of the integrals over $\gamma_2^{(T_m)}$ ad $\gamma_4^{(T_m)}$ are 0. Thus,
        \begin{align}
        \overline{H}(\graph(r_0)|\mathcal{R}) = 
        \lim_{m\tti} \frac{1}{i\pi}\int_{\gamma_3^{(T_m)}} \mathcal{M}[\rho; s]C[F;-s]ds \nonumber \\ + 2\sum_{m=-\infty}^{\infty}r_0^{s_m}\psi(s_m)\frac{1}{2\log\alpha}\int_0^1 (r+\alpha-1)^{-s_m}dF(r)
    \end{align}
    Since the sum over residues converges absolutely, we may rewrite the sum as
    \begin{align}
        \sum_{m=-\infty}^{\infty}r_0^{s_m}\psi(s_m)\frac{1}{2\log\alpha}\int_0^1 (r+\alpha-1)^{-s_m}dF(r) \nonumber \\ = r_0^d\psi(c_r)\frac{1}{2\log \alpha}\int_0^1(r+\alpha-1)^ddF(r) + r_0^d\sum_{m=1}^{\infty}\left[r_0^{\frac{2\pi i}{\log \alpha}m}\psi(s_m)\frac{1}{2\log\alpha}\int_0^1 (r+\alpha-1)^{-s_m}dF(r)\right. \nonumber \\ \left. +r_0^{-\frac{2\pi i}{\log \alpha}m}\psi(s_m)\frac{1}{2\log\alpha}\int_0^1 (r+\alpha-1)^{-s_m}dF(r)\right] \nonumber \\
        = r_0^d R_0 + 2r_0^d\sum_{m=1}^{\infty} R_m\left(\frac{e^{i\theta_m + 2\pi i m\frac{\log r_0}{\log \alpha}}+e^{-i\theta_m - 2\pi i m\frac{\log r_0}{\log \alpha}}}{2}\right) \nonumber \\
        = r_0^d R_0 + 2r_0^d\sum_{m=1}^{\infty} R_m\cos\left(\theta_m + \frac{2\pi m \log r_0}{\log \alpha}\right)
    \end{align}
    where, with $g(m) = \frac{\psi(s_m)}{2\log \alpha}\int_0^1(r+\alpha-1)^{-s_m}dF(r)$, $R_m = |g(m)|$, and $\theta_m = \arg(g(m))$. Now, 
    \begin{equation}
        \lim_{m\tti} \frac{1}{i\pi}\int_{\gamma_2^{(T_m)}+\gamma_3^{(T_m)}+\gamma_4^{(T_m)}} \mathcal{M}[\rho; s]C[F;-s]ds \leq \lim_{m\tti} \frac{1}{i\pi}\int_{\gamma_3^{(T_m)}} \mathcal{M}[\rho; s]C[F;-s]ds
    \end{equation}
    \begin{equation}
        \leq r_0^{c_r}\left(\frac{2C}{t^*} + t^*\int_0^{\infty}u^{c_r-1}h_2(p(u))du\right)
    \end{equation}
    where we have used Lemma \ref{thm:final_finiite_bound}. Since $c_r >d$, this upper bound is $o(r_0^d)$, and so we are done.
\end{proof}

\printbibliography

\end{document}